 \newtheorem{theorem}{Theorem}[section]
  \newtheorem{lemma}[theorem]{Lemma}
   \theoremstyle{definition}
\newtheorem{definition}[theorem]{Definition}
\newtheorem{remark}[theorem]{Remark}
\newcommand{\Z}{\ensuremath{{\mathbb{Z}}}}
\newcommand{\R}{\ensuremath{{\mathbb{R}}}}
  \newcommand{\C}{\ensuremath{{\mathbb{C}}}}
  \newcommand{\G}{\Gamma}
    \newcommand{\AG}{\ensuremath{{A_{\Gamma}}}}
        \newcommand{\WG}{\ensuremath{{W_{\Gamma}}}}
   \newcommand{\CG}{\ensuremath{{\mathcal{C}_{\Gamma}}}}  
     \newcommand{\DG}{\ensuremath{{\mathcal{D}_{\Gamma}}}}  
            \newcommand{\DT}{\ensuremath{{\mathcal{D}_{T}}}}  
     \newcommand{\Scl}{\ensuremath{{\mathcal{S}^{\Delta}}}} 
\newdimen\Rad
\tikzset{vertex/.style={circle, draw, fill=black!50},inner sep=0pt, minimum width=4pt}
\title{Artin groups of infinite type: trivial centers and acylindical hyperbolicity}
\author{Ruth Charney and Rose Morris-Wright}
\thanks {R. Charney was partially supported by NSF grant DMS-1607616}
\begin{document}

\begin{abstract}  
While finite type Artin groups and right-angled Artin groups are well-understood, little is known about more general Artin groups.  In this paper we use the action of an infinite type Artin group $A_\G$ on a CAT(0) cube complex to prove that $A_\G$ has trivial center providing $\G$ is not the star of a single vertex, and is acylindrically hyperbolic providing $\G$ is not a join. 
\end{abstract}

\maketitle

\section{Introduction and Background}

	This paper concerns properties of Artin groups.  Artin groups are closely connected to Coxeter groups, so we begin by recalling the definitions of these two classes of groups.  Let $\G$ be a finite simplicial graph with vertices 
$S=\{s_1, \dots s_n\}$ and edges $e(s_i,s_j)$ labeled by integers $m_{i,j} \geq 2$.  The \emph{Coxeter group} associated to $\G$ is the group with presentation
$$\WG = \langle S \mid s_i^2=1, (s_is_j)^{m_{i,j}}=1\rangle $$
where the first relation holds for all $i$ and the second holds whenever $s_i,s_j$ are connected by an edge.
 For convenience, we often say $m_{i,j}=\infty$ if $s_i,s_j$ are not connected by an edge.
The \emph{Artin group} associated to $\G$ is the group with presentation
$$A=\langle s_1,\dots,s_n\ \mid\ 
\underbrace{s_is_js_i\dots}_{m_{ij}\text{ terms}}=\underbrace{s_js_is_j\dots}_{m_{ij}\text{ terms}}\ 
\textrm{ for all } i\neq j\rangle \,.$$
There is a natural surjection of $\AG \to \WG$ which takes each generator in $\AG$ to the generator of the same name in $\WG$.  An Artin group $\AG$ is said to be \emph{finite type} (respectively \emph{infinite type}) if the corresponding Coxeter group $\WG$ is finite (respectively infinite).  Artin groups first appear as ``extended Coxeter groups" in a paper of Tits \cite{Ti} and they are often called \emph{Artin-Tits groups}.

There is also a geometric description of these groups.  Coxeter groups can be realized as discrete subgroups of $GL(n,\R)$ with the generators (and their conjugates) acting as reflections.  Complexifying this action gives an action of $\WG$ on $\C^n$.  The set of regular points of this action (that is, points with trivial stabilizer) is the hyperplane complement
$$\mathcal H_\G = \C^n - \bigcup_r H_r$$
where $r$ ranges over all conjugates of the generators in $\WG$, and $H_r$ denotes the complex hyperplane fixed by the reflection $r$.  $\WG$ acts freely on this space, and 
in the case of a finite Coxeter group, $\mathcal H_\G / \WG$ has fundamental group $\AG$.  Moreover, it was shown by Deligne to be a $K(\AG,1)$-space \cite{Del}. 
The classical example is the case where $\WG$ is the symmetric group on $n$ letters acting on $\C^n$ by permuting the coordinates.  In this case, $\mathcal H_\G / \WG$ is the configuration space of $n$ (unordered) points in the complex plane, and its fundamental group, $\AG$, is well-known to be the braid group on $n$ strands.
Artin groups associated to infinite Coxeter groups also arise as fundamental groups of hyperplane complements (after restricting to an open cone in $\C^n$), but the question of whether this space is a  $K(\pi,1)$-space remains an open conjecture.  For a thorough discussion of this question see Paris' survey article \cite{Par2}.

Coxeter groups have been extensively studied using methods from algebra, combinatorics, geometry, and representation theory.  In general, these techniques apply to both finite and infinite Coxeter groups, and both classes are well-understood.  Finite type Artin groups are also well understood, thanks in large part, to a particularly nice combinatorial structure, known as a Garside structure, that gives rise to nice normal forms and effective approaches to many algebraic questions.
Certain infinite type Artin groups, such as right-angled Artin groups (all $m_{i,j}=2$ or $\infty$) and extra-large Artin groups (all $m_{i,j}\geq 4$), are also fairly well understood.
More general infinite type Artin groups, on the other hand, remain largely mysterious.  Here are some long-standing conjectures that remain open for a general Artin group of infinite type. 
\medskip

\noindent{\bf Conjectures:}  Let $\AG$ be an Artin group of infinite type.
\begin{enumerate}
\item $\AG$ has solvable word problem.
\item $\AG$ is torsion-free.
\item If $\AG$ is irreducible (does not split as a direct product of two Artin subgroups), then $\AG$ has trivial center. 
\item There exists a finite $K(\AG,1)$-space (i.e., $\AG$ is of type F).
\item  The complex hyperplane complement associated to $\WG$ is a $K(\AG,1)$-space.
\end{enumerate}
\medskip

More recently, there has been interest in understanding geometric properties of Artin groups.  For example, which of these groups are CAT(0) groups?  Which are acylindrically hyperbolic?  These properties involve actions of a group on spaces of non-positive or negative curvature and have been shown to have strong implications for the group itself.  Recent results by Calvez and Wiest \cite{Ca-Wi} show that irreducible finite type Artin groups (modulo their center) are acylindrically hyperbolic, but the question of whether Artin Groups are CAT(0) remains open even for braid groups. For some partial results, see \cite{Br-Mc, Ha-Th-Sc, Hae}.  (We remark that the only Artin groups that are hyperbolic are free groups since for any graph $\G$ containing an edge, $\AG$ can easily be shown to contain a $\Z^2$-subgroup.  Moreover, Behrstock, Drutu, and Mosher have shown that the only Artin groups that are relatively hyperbolic are those that decompose as direct products \cite{BeDrMo}.)

There is one other class of infinite type Artin groups for which we know that all of the conjectures listed above hold, namely the Artin groups of FC-type.  These were introduced by the first author and M.~Davis in \cite{Ch-Da1} where it was shown that a certain cubical complex $D_\G$, called the Deligne complex, is homotopy equivalent to the universal cover of the hyperplane complement for $\WG$.  If $D_\G$ is CAT(0), then it is contractible, so the $K(\pi,1)$-conjecture (Conjecture (5) above) holds for $\AG$.  This holds precisely when every clique $\Theta \subset \G$ generates a finite subgroup $W_\Theta$ (or equivalently, a finite type subgroup $A_\Theta$).  Artin groups satisfying this condition are called \emph{Artin groups of FC-type}.  (Here, ``FC" stands for Flag Complex, the condition on links in $D_\G$ required for the CAT(0) property to hold.)  It turns out that all of the above conjectures can be proved in the FC type case using the Deligne complex  (\cite{Al-Ch, Ch-Da1, God}).  Moreover, Chatterji and Martin \cite{Ch-Ma} have recently used the action of $\AG$ on $D_\G$ to show that FC-type Artin groups are acylindrically hyperbolic providing the defining graph has diameter at least 3. 

Variations on the Deligne complex were introduced by Godelle and Paris in \cite{Go-Pa1}.  In this paper we focus on one of those complexes, which we call the clique-cube complex. The clique-cube complex is CAT(0) for \emph{any} graph $\G$.  We use this complex to prove several new results for general Artin groups.

In Section 3, we show that the center of $\AG$ is trivial, providing $\G$ is not the star of a single vertex (Theorem \ref{thm: trivial center}). In Section 4, we prove that any Artin group $\AG$ is acyindrically hyperbolic, providing $\G$ does not decompose as the join of two subgraphs (Theorem \ref{thm: acyl hyp}).  In the final section, we give simple proofs of some prior results of Ellis-Sk\"oldberg \cite{El-Sk} and Godelle-Paris \cite{Go-Pa1} reducing several of the other conjectures to the case where $\G$ consists of a single clique.  

The authors would like to thank Anthony Genevois for helpful comments.  The first author would like to thank the Mathematics Institute at Warwick University for their hospitality during the completion of this paper.

\section{Introducing the Clique-Cube Complex} \label{Cube Complex}

	Let $\G$ be a finite simplicial graph with vertices $S=\{s_1, \dots s_n\}$ and edges $e(s_i,s_j)$ labeled by integers $m_{i,j} \geq 2$.  
Let $\AG$ denote the Artin group associated to $\G$.   By a theorem of van der Lek (\cite{vdL, Par1}),
for any subset $T \subset S$, the subgroup of $\AG$ generated by $T$ is itself an Artin group with defining graph
consisting of the full subgraph of $\G$ spanned by $T$.  We denote this subgroup by $A_T$ and by abuse of notation, frequently conflate $T$ with the subgraph generated by $T$.  When $T$ is empty, we define $A_\emptyset=\{1\}$. 

We say $T$ spans a \emph{clique} in $\G$ if any two elements of $T$ are joined by an edge in $\G$. 

\begin{definition}
Consider the set 
$$ \Scl = \{ T \subseteq S \mid \textrm{$T$ spans a clique in $\G$, or $T = \emptyset$}\}$$

 The \textit{clique-cube complex}, denoted $\CG,$ is the cube complex whose vertices are cosets $gA_T$, $T \in \Scl$, where $gA_T$ and $hA_{T'}$ are joined by an edge if and only if $gA_T \subset hA_{T'}$  and $T$ and $T'$ differ by a single generator. Note that in this case, we can always replace $h$ by $g$, that is $hA_{T'}=gA_{T'}$.   More generally, for any pair $gA_T \subset gA_{T'}$, the interval $[gA_T, gA_{T'}]$ spans a cube of dimension $|T' \backslash T|$.
\end{definition}

\begin{figure}
 
\begin{tikzpicture}[ scale=1.3]
  						 \draw 
    								\foreach \x in {-2,-1,2,3,4,5,6} {
            									(0,0)-- (\x*30:2\Rad) node[vertex,label={[shift={(0.35,0.1)}]$s^{\x}A_\emptyset$}]{}
        							}; 
        							
        							\draw (0,0)--(30:2\Rad) node[vertex,label={[shift={(0.35,0.1)}]$sA_\emptyset$}]{};
        							
        							\draw[blue, very thick] (0,0)-- (0cm:2\Rad) node[vertex,fill=blue,label={[shift={(0.3,0.1)}]$A_\emptyset$}]{}
        							(0:0) node[vertex,fill=blue,label={[shift={(-0.4,-0.6)}]$A_{\{s\}}$}]{};
        							
        							\draw
        							
        							(-70:1.85\Rad) node[label={$\dots$}]{}
        							(193:1.8\Rad) node[label={[rotate={-45}]$\dots$}]{}
        							;
\end{tikzpicture}
\caption{The complex $\CG$ where $\G$ is the graph consisting of a single vertex. The \textcolor{blue}{fundamental domain} of the action of $\AG$ on $\CG$ is the edge spanned by $A_{\{s\}}$ and $A_\emptyset$. }
\label{fig:Example 1}
\end{figure}
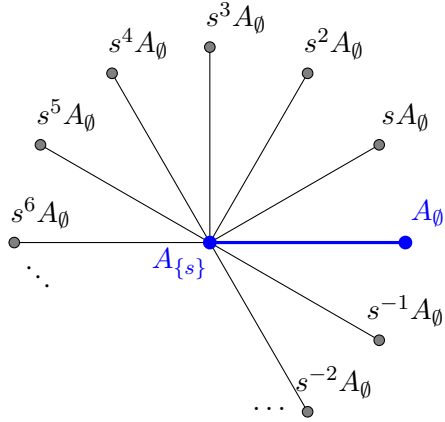

The construction of the clique-cube complex is similar to that of the Deligne complex introduced in \cite{Ch-Da1}. The Deligne complex is a cube complex defined in an identical fashion by replacing the set $\Scl$ by the set 
$$S^f=\{ T \subseteq S \mid \textrm{$A_T$ is finite type}\}.$$

The clique-cube complex first appears in a paper of Godelle and Paris  \cite{Go-Pa1}.   
As noted in the introduction,  the Deligne complex is only CAT(0) for a particular type of Artin group, called Artin groups of FC-type. Godelle and Paris prove that the clique-cube complex is CAT(0) for all Artin groups.  

\begin{theorem}[\cite{Go-Pa1}]
The clique-cube complex, $\CG$, is CAT(0) for any graph $\G$. 
\end{theorem}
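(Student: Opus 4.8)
The plan is to verify Gromov's criterion for a cube complex to be CAT(0): it suffices that $\CG$ be simply connected and that the link of each of its vertices be a flag simplicial complex. Throughout I would use the left action $g\cdot hA_T=ghA_T$ of $\AG$ on $\CG$; the quotient is the cube complex $K$ built on the poset $(\Scl,\subseteq)$, with a $d$-cube for each interval $[T_0,T_1]$ having $|T_1\setminus T_0|=d$. Since $\emptyset$ is the least element of $\Scl$, $K$ is the union of the cubes $[\emptyset,T]$ over the maximal cliques $T$, and is therefore contractible.

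For simple connectivity I would realize $\CG$ as the development of the simple complex of groups over $K$ having local group $A_T$ at the vertex $T$ (and $A_{T\cap T'}$ along faces), exactly as Charney and Davis treat the Deligne complex in \cite{Ch-Da1}. By the general theory of complexes of groups, this one is developable and its development is simply connected as soon as the natural maps $A_T\to\widehat G$ are injective and $\widehat G\cong\AG$, where $\widehat G=\varinjlim_{T\in\Scl}A_T$. The isomorphism $\widehat G\cong\AG$ is formal: for $T$ a clique the group $A_T$ is presented using only the braid relations on pairs of its generators, so the colimit has precisely the generators and relations of $\AG$; and the injectivity of $A_T\to\AG$ is van der Lek's theorem \cite{vdL,Par1}. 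Hence $\CG$ is simply connected.

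Next consider the link $\mathrm{lk}(v)$ of a vertex $v=gA_T$. Its edges are of two kinds: \emph{ascending} edges $[gA_T,gA_{T\cup\{s\}}]$, with $s\notin T$ adjacent in $\G$ to every element of $T$, and \emph{descending} edges $[hA_{T\setminus\{t\}},gA_T]$, with $t\in T$ and $hA_{T\setminus\{t\}}\subseteq gA_T$. A direct check of which cubes at $v$ fit together shows that $\mathrm{lk}(v)$ is the join of the subcomplex spanned by the ascending edges with the one spanned by the descending edges: any ascending and any descending edge span a square (the interval $[hA_{T\setminus\{t\}},hA_{T\cup\{s\}}]$ is a cube, since $T\cup\{s\}$ is a clique), two ascending edges span a square exactly when the two added generators are adjacent in $\G$, and a collection of descending edges spans a cube exactly when the corresponding cosets have a common point. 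The ascending factor is therefore the flag complex of the link graph $\mathrm{lk}_\G(T)$, which is automatically flag; and since a join is flag iff both factors are, the whole question reduces to the descending factor. Translating $v$ to $A_T$, this factor is the \emph{coset nerve} of the family of standard parabolic subgroups $\{A_{T\setminus\{t\}}:t\in T\}$ of $A_T$: its vertex set is $\bigsqcup_{t\in T}A_T/A_{T\setminus\{t\}}$, and a set of vertices spans a simplex precisely when it has nonempty common intersection. (Equivalently, it is the link of $A_T$ in $\CT$ for the complete subgraph on $T$.)

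The hard part will be showing this coset nerve is flag, i.e.\ that a pairwise-intersecting family of cosets of the subgroups $A_{T\setminus\{t\}}$ in $A_T$ always has a common point. Van der Lek's identity $A_X\cap A_Y=A_{X\cap Y}$ makes each intersection $A_{T\setminus\{t_1\}}\cap g\,A_{T\setminus\{t_i\}}$, when nonempty, a coset of a standard parabolic of the smaller Artin group $A_{T\setminus\{t_1\}}$, and a double induction (on the size of the family and on $|T|$) reduces the statement to its first nontrivial case: that three pairwise-intersecting cosets of $A_{T\setminus\{t_1\}}$, $A_{T\setminus\{t_2\}}$, $A_{T\setminus\{t_3\}}$ in $A_T$ have a common point. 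Unwinding this case, it is equivalent to a modular law for standard parabolics of the form $A_W\cap(A_XA_Y)=(A_W\cap A_X)(A_W\cap A_Y)$ in the configurations that occur, and establishing this --- presumably via a normal-form analysis of products $A_XA_Y$ on top of van der Lek's embedding theorem --- is the real technical content, and the main obstacle. (One could instead bypass Gromov's criterion and exhibit $\CG$ directly as the cube complex dual to a natural wallspace, its walls indexed by a generator together with appropriate coset data; this reorganizes, but does not remove, the dependence on intersection properties of parabolic subgroups.)
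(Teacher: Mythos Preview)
The paper does not give its own proof of this theorem; it is quoted from Godelle--Paris \cite{Go-Pa1} and stated without argument. So there is nothing in the paper to compare your proposal against directly.

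As for the proposal itself: your architecture is the right one, and matches what Godelle--Paris actually do. Gromov's link condition is the natural tool; your argument for simple connectivity via the complex of groups and van der Lek's injectivity theorem is correct; and your decomposition of $\mathrm{lk}(gA_T)$ as the join of an ascending factor (the flag complex on $\mathrm{lk}_\G(T)$, automatically flag) and a descending factor (the coset nerve of the family $\{A_{T\setminus\{t\}}\}_{t\in T}$ inside $A_T$) is exactly right. You have also correctly reduced the flag condition on the descending factor to a Helly-type statement: pairwise-intersecting cosets of maximal standard parabolics of $A_T$ have a common point, which you further unwind to a modular law $A_W\cap(A_XA_Y)=(A_W\cap A_X)(A_W\cap A_Y)$ for the relevant standard parabolics.

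The gap is that you stop there. You say this ``is the real technical content, and the main obstacle'' and gesture at ``presumably \dots\ a normal-form analysis,'' but you do not prove it. That step is not a formality: van der Lek's theorem gives you $A_X\cap A_Y=A_{X\cap Y}$, but it says nothing about products $A_XA_Y$ or their intersections with a third parabolic, and there is no general normal form available for infinite-type $A_T$ to fall back on. This Helly/modular property for cosets of standard parabolics is precisely the content that Godelle--Paris supply, and it is the reason the theorem is attributed to them rather than being a corollary of \cite{vdL}. Your alternative wallspace suggestion does not escape this either, since checking that the proposed walls satisfy the required separation and compatibility axioms runs into the same intersection questions.

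In short: your outline is correct and locates the difficulty accurately, but it is an outline, not a proof. The missing ingredient---the coset-intersection (Helly) property for standard parabolics of a clique Artin group---is the theorem.
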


The group $\AG$  acts on the clique-cube complex $\CG$ by left multiplication, $h\cdot gA_T=(hg)A_T$.  This action preserves the cubical structure and so is an action by isometries. 
The action is also co-compact with fundamental domain consisting of those cubes spanned by $A_\emptyset$ and vertices of the form $A_T$ for some $T\in \Scl.$   However the action is not proper. In particular the stabilizer of a vertex $gA_T$ is the conjugate subgroup $gA_Tg^{-1}$, so all vertices except translates of $A_\emptyset$ have infinite stabilizers. 
We also note that $\CG$ is not a proper metric space since it contains infinite valence vertices.

Each edge in $\CG$ can be labeled with a generator in $S$. For example, the edge between $gA_T$ and $gA_{T\cup \{s\}}$ is labeled $s$.  Moreover, any two parallel edges in a cube have the same label, so we can also label the hyperplane dual to such an edge by $s$.  It is easy to see that every hyperplane of type $s$ is the translate of the hyperplane $H_s$ dual to the edge between $A_\emptyset$ and $A_{\{s\}}$.

The following lemmas illustrate some properties of the clique-cube complex that will be useful in later sections. 
Recall that a simplicial complex is a \emph{flag complex} if whenever a collection of vertices $\{v_0, \dots ,v_k\}$  are pairwise joined by edges, they span a $k$-simplex. In particular, a flag complex is completely determined by its 1-skeleton.  

 \begin{figure}
 
\begin{tikzpicture}[thick, scale=1]


\begin{scope}[shift={(-5.5,5)}]
\draw (150:2) node[]{{\LARGE $\G:$}};

\draw (0,0)--(1,0);
\draw[rotate =150] (0,0)--(1,0)--(1/2,0.87)--(0,0);

\draw (0,0) node[vertex,label={[shift={(0,0.1)}]$c$}]{};
\draw[rotate =150] (1,0) node[vertex,label={[shift={(0,0.1)}]$b$}]{};
\draw (1,0) node[vertex,label={[shift={(0,0.1)}]$d$}]{};
\draw[rotate =210] (1,0) node[vertex,label={[shift={(0,-0.5)}]$a$}]{};

\end{scope}
\begin{scope} [scale=3.5]

\coordinate (E) at (0,0,0);
\coordinate (D) at (1,0,0);
\coordinate (A) at (0,0,1);
\coordinate (C) at (0,1,0);
\coordinate (B) at (-1,0,0);
\coordinate (CD) at (1,1,0);
\coordinate (BC) at (-1,1,0);
\coordinate (AC) at (0,1,1);
\coordinate (AB) at (-1,0,1);
\coordinate (ABC) at (-1,1,1);

\draw (E)-- (D)--(CD)--(C)-- cycle;
\draw (E)--(A)--(AC)-- (C); 
\draw (AC)--(ABC)--(AB)-- (A); 
\draw (C)--(BC)--(ABC); 
\draw [dashed] (E)--(B)--(BC); 
\draw [dashed] (B)--(AB);


\draw (E) node[vertex,label={[shift={(0.3,0.05)}]$A_\emptyset$}]{};
\draw (A) node[vertex,label={[shift={(0.35,-0.6)}]$A_{\{a\}}$}]{};		
\draw (B) node[vertex,label={[shift={(0.45,0.05)}]$A_{\{b\}}$}]{};	
\draw (C) node[vertex,label={[shift={(0.3,0.05)}]$A_{\{c\}}$}]{};	
\draw (D) node[vertex,label={[shift={(0.45,0.05)}]$A_{\{d\}}$}]{};	
\draw (AB) node[vertex,label={[shift={(-0.3,-0.6)}]$A_{\{a,b\}}$}]{};
\draw (AC) node[vertex,label={[shift={(-0.3,0.1)}]$A_{\{a,c\}}$}]{};
\draw (BC) node[vertex,label={[shift={(0.3,0.05)}]$A_{\{b,c\}}$}]{};	
\draw (CD) node[vertex,label={[shift={(0.3,0.05)}]$A_{\{c,d\}}$}]{};		
\draw (ABC) node[vertex,label={[shift={(-0.35,0.1)}]$A_{\{a,b,c\}}$}]{};										


\draw [blue, thick,domain=0:180] plot ({1/3*cos(\x)},{ 1/3*sin(\x)},0);

\filldraw[fill opacity=0.5,fill=blue,draw=blue] (0,1/3,0) arc (90:180:1/3)--(-1/3,0,0) arc (180:245:1/3 and .14)-- (0,0,1/3)arc (203:90: 0.14 and 1/3);

\draw (.28,.27) node[label={[blue,shift={(0,0)}]$lk(A_\emptyset)$}]{};

\end{scope}

\end{tikzpicture}
\caption{The fundamental domain of the complex $\CG$ where $\G$ is the graph shown, with \textcolor{blue}{the link of $A_\emptyset$.}  By Lemma \ref{link iso to Gamma} the flag complex $\tilde{\G}$ is isomorphic to $lk(A_\emptyset)$.  }
\label{fig:Example link}
\end{figure}
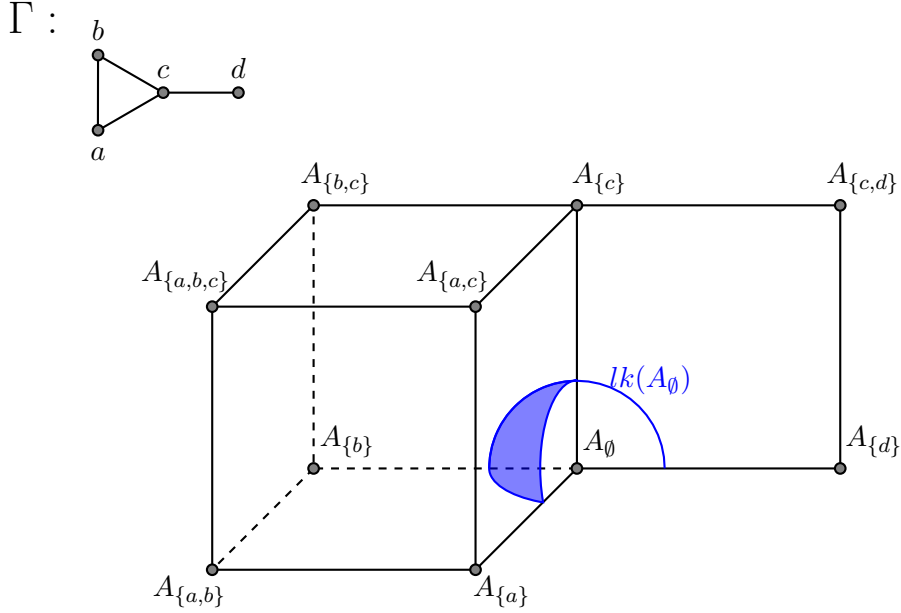 

\begin{lemma}
\label{link iso to Gamma}
In the clique-cube complex, $\CG$, the link of the vertex $A_\emptyset$ is isomorphic to $\tilde{\Gamma}$,  the flag simplicial complex whose 1-skeleton is $\Gamma$. 
\end{lemma}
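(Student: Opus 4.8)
The plan is to unwind the definition of the link and match it, piece by piece, to the flag complex $\tilde\G$. Recall that the vertices of $\CG$ adjacent to $A_\emptyset$ are exactly the cosets $A_{\{s\}}$ for $s\in S$ (any coset $gA_{\{s\}}$ containing $A_\emptyset$ must have $g\in A_{\{s\}}$, hence $gA_{\{s\}}=A_{\{s\}}$; and no coset $A_T$ with $|T|\geq 2$ is joined to $A_\emptyset$ by an edge since edges only connect cosets whose types differ by a single generator). So the vertex set of $lk(A_\emptyset)$ is in natural bijection with $S$, the vertex set of $\G$. First I would make this bijection precise and note it is the obvious one $s\leftrightarrow A_{\{s\}}$.

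Next I would identify the edges of the link. An edge of $lk(A_\emptyset)$ corresponds to a square of $\CG$ having $A_\emptyset$ as a corner. By the definition of $\CG$, a $2$-cube is an interval $[gA_T, gA_{T'}]$ with $|T'\setminus T|=2$; for $A_\emptyset$ to be a vertex of this square we need $gA_T = A_\emptyset$, i.e.\ $T=\emptyset$ and (after adjusting $g$) $g=1$, so $T'=\{s,t\}$ for two distinct generators $s,t$, and this requires $\{s,t\}\in\Scl$, i.e.\ $s$ and $t$ are joined by an edge in $\G$. The two edges of this square incident to $A_\emptyset$ are then the edges to $A_{\{s\}}$ and to $A_{\{t\}}$, so the corresponding edge of the link joins the vertices $s$ and $t$. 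Thus $lk(A_\emptyset)$ and $\G$ have the same $1$-skeleton under the bijection $s\leftrightarrow A_{\{s\}}$: there is an edge between $s$ and $t$ in the link iff $\{s,t\}$ spans a clique (= an edge) in $\G$.

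Finally I would argue that $lk(A_\emptyset)$ is a flag complex, which by the definition of $\CG$ is essentially immediate: a $k$-simplex of the link corresponds to a $(k+1)$-cube of $\CG$ with $A_\emptyset$ as a corner, and such cubes are exactly the intervals $[A_\emptyset, A_{T'}]$ with $|T'|=k+1$ and $T'\in\Scl$; by definition of $\Scl$, $T'\in\Scl$ exactly when $T'$ spans a clique in $\G$, i.e.\ when its vertices are pairwise adjacent, i.e.\ when the corresponding vertices of the link are pairwise joined by edges. Hence whenever $k+1$ vertices of $lk(A_\emptyset)$ are pairwise connected by edges they span a simplex, so $lk(A_\emptyset)$ is the flag complex determined by its $1$-skeleton $\G$, which is precisely $\tilde\G$.

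I do not anticipate a serious obstacle here; the only point requiring a little care is the bookkeeping that edges/cubes of $\CG$ incident to $A_\emptyset$ force the coset representative to be trivial (using that $A_\emptyset=\{1\}$ and that $A_\emptyset\subseteq gA_T$ implies $1\in gA_T$, hence $gA_T=A_T$), so that the link is computed ``at the identity'' and matches $\G$ rather than some translate. Once that is observed, the identification of vertices, edges, and the flag condition all read off directly from the definition of $\Scl$ and of $\CG$.
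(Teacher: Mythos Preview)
Your proposal is correct and follows essentially the same approach as the paper: both identify simplices of $lk(A_\emptyset)$ with cubes of $\CG$ having $A_\emptyset$ as a vertex, and then observe that such cubes are exactly the intervals $[A_\emptyset, A_T]$ for $T\in\Scl$, which correspond precisely to cliques in $\G$. Your version is slightly more explicit about why the coset representative can be taken to be trivial (using $A_\emptyset=\{1\}$), a point the paper leaves implicit.
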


\begin{proof}
Given two elements of $S$, we know that $s_i$ and $s_j$ are joined by an edge in $\Gamma$ if and only if  $\{s_i,s_j\}\in \Scl$, which happens if and only if $A_\emptyset$ and $A_{\{s_i,s_j\}}$ span a cube.  
The intersection of link of $A_{\emptyset}$ and this this cube is an arc joining  $A_{\{s_i\}}$ and $A_{\{s_j\}}$.   
Thus, $\Gamma$ is isomorphic to the one skeleton of $lk(A_\emptyset)$.  

More generally, $A_\emptyset$ and $A_T$ span a cube in $\CG$ if and only if $T$ spans a clique in $\G$,
that is, if and only if every pair $s_i,s_j \in T$ is joined by an edge.  Thus $k$-simplices in 
$lk(A_\emptyset)$ correspond precisely to the $k$-simplices in $\tilde{\G}$. 
\end{proof}

\begin{lemma}\label{not join implies irreducible}
If the clique-cube complex, $\CG$, decomposes as a product of two subcomplexes, then $\Gamma$ is a join. In particular if $\Gamma$ is not a join,  then $\CG$ is irreducible.  
\end{lemma}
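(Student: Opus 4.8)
The plan is to read off the join structure of $\Gamma$ from the link of the vertex $A_\emptyset$, using Lemma \ref{link iso to Gamma}. First, suppose $\CG$ decomposes as a product of subcomplexes; since a product with a one-point factor is trivial, we may assume $\CG \cong X_1 \times X_2$ with each $X_i$ a cube complex having more than one vertex. Because $\CG$ is CAT(0) it is connected, hence so is each $X_i$ (it is the image of a projection, which is continuous and surjective); a connected complex with at least two vertices has an edge at every vertex, so every vertex of $X_i$ has valence $\geq 1$.

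Next I would invoke the elementary description of products of cube complexes: the cubes of $X_1 \times X_2$ are exactly the products $C_1 \times C_2$ of cubes $C_i \subseteq X_i$, with $\dim(C_1 \times C_2) = \dim C_1 + \dim C_2$. Consequently, for a vertex $v=(v_1,v_2)$, the cubes containing $v$ are the products of a cube containing $v_1$ with a cube containing $v_2$, and translated into links this reads $lk(v) = lk(v_1) \ast lk(v_2)$, a simplicial join. Applying this with $v = A_\emptyset = (v_1,v_2)$ and combining with Lemma \ref{link iso to Gamma} gives
$$\tilde{\Gamma} \;\cong\; lk(A_\emptyset) \;=\; lk(v_1)\ast lk(v_2).$$
By the previous paragraph each $v_i$ has positive valence, so each $lk(v_i)$ is a nonempty complex; let $V_i \subseteq S$ be its (nonempty) vertex set, so that $S = V_1 \sqcup V_2$. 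In a simplicial join every vertex of the first factor is joined by an edge to every vertex of the second, so every element of $V_1$ is adjacent in $\Gamma$ to every element of $V_2$; that is, $\Gamma$ is the join of the full subgraphs spanned by $V_1$ and $V_2$. Here one uses that $\tilde{\Gamma}$ is flag, hence determined by its $1$-skeleton $\Gamma$, so no higher-dimensional simplices need be checked. The final assertion is the contrapositive.

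The only real content is the link-of-a-product formula together with the identification of a ``product decomposition of $\CG$'' with a genuine cubical product. If one wished to allow an a priori merely metric splitting $\CG \cong Y_1 \times Y_2$, one would first have to cite the fact that a product decomposition of a CAT(0) cube complex can be realized cubically, so that the $Y_i$ are again cube complexes and the argument above applies; in the present setting ``product of subcomplexes'' is naturally read as a cubical product, so this point does not arise. The remaining bookkeeping — matching joins of flag complexes with graph joins — is routine, so I do not expect a serious obstacle.
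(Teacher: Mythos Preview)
Your argument is correct and follows essentially the same route as the paper: use that the link of a vertex in a product is the join of the links in the factors, apply this at $A_\emptyset$, and invoke Lemma~\ref{link iso to Gamma}. The paper's proof is a two-sentence version of exactly this; you have simply filled in the details about nonemptiness of the factors and the translation from joins of flag complexes to graph joins.
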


\begin{proof}
If $\CG$ is a product, $\CG=X_1 \times X_2$, then for any vertex $(v_1,v_2)$, the link of $(v_1,v_2)$ is the join of the link of $v_1$ and the link of $v_2$. In particular the link of $A_\emptyset$ must be a join, and therefore  by Lemma \ref{link iso to Gamma}, $\Gamma$ must also be a join. 
\end{proof}

\begin{remark} Here, the condition that $\Gamma$ is a join concerns only the structure of the underlying graph, 
without regard to the labelling.  As a result, the converse of Lemma \ref{not join implies irreducible} is false. For example, suppose $\G$ consists of two vertices $s,t$ joined by an edge and consider the link of the vertex $A_{\{s,t\}}$. The verticies adjacent to $A_{\{s,t\}}$ can be partitioned to into two sets, those of the form $gA_{\{t\}}$ or those of the form $hA_{\{s\}}$. No two verticies of the same form span a cube in $\CG$, so this corresponds to a partition of the verticies of the link into two totally disconnected subcomplexes. Thus, if the link of $A_{\{s,t\}}$ is a join, it must be the join of these two sets.  However, if the label of the edge of $\G$ is an integer greater than 2, then the link is not a join of these two subcomplexes, as for example $A_{\{t\}}$ and $stA_{\{s\}}$ do not span a cube. This shows that the link of $A_{\{s,t\}}$ is not a join and thus, $\CG$ is irreducible.
\end{remark}


To prove our main theorems, we will need to construct geodesic paths in $\CG$ with particular properties.
In a CAT(0) cube complex $X$, we can concatenate geodesic line segments to get a geodesic $\gamma$ if, at each point $p$ where the line segments meet, the distance in $lk(p)$ between the incoming and outgoing segments is at least $\pi$.  

For a vertex $p$ in $X$, the metric on $lk(p)$ is the piecewise spherical metric where each edge has length $\pi/2$, or equivalently, each $k$-simplex is isometric to the intersection of the unit sphere with a quadrant in $\R^{k+1}$.  
An easy exercise shows that for such a simplex $\sigma$, the distance between any two points on disjoint faces
of $\sigma$ is $\pi/2$.  In particular, if two edges in $X$ meet at a vertex, the angle between them is either $\pi/2$ 
(if they span a cube) or $\geq \pi$ (if they do not span a cube). Thus, any edge path in $X$ with the property that no two consecutive edges span a cube, is a geodesic in the CAT(0) metric.

Now consider the link of the vertex $A_\emptyset$ in $\CG$.  As observed above, it is isomorphic to  $\tilde\G$, the flag complex associated to $\G$, with the piecewise spherical metric.
 It follows from the discussion above that that for two vertices $v,w$ in $\G$,
$$d_{\tilde\G}(v,w) = \frac{\pi}{2}\, d_{\G}(v,w)$$
 Moreover, for any two points $x,y \in \tilde\G$,  with $x$ in the simplex spanned by $T_1$ and $y$ in the simplex spanned by $T_2$,
$$d_{\tilde\G}(x,y) \geq \frac{\pi}{2}\, d_{\G}(T_1,T_2).$$

The following lemma will be useful in constructing loxodromic elements of $\AG$ acting on $\CG$. By a loxodromic element we mean one that acts by translation along a bi-infinite geodesic, called the axis of the element.

\begin{lemma}\label{define lox} Suppose that $s_1,\dots ,s_k$ is a sequence of elements of $S$ such that 
$$\textrm{$d_\G(s_i,s_{i+1})> 1$ for all $i$ and $d_\G(s_1,s_k)> 1$.}$$
Set $g=s_1s_2\cdots s_k$ and $g_i=s_1s_2\cdots s_{i}$.  Then $g$ is a loxodromic element with axis 
\[ \dots s_k^{-1}A_\emptyset, \, A_{\{s_k\}},\, A_\emptyset,\,  A_{\{s_1\}},\, g_1A_\emptyset, \, g_1A_{\{s_2\}}, \, g_2A_\emptyset, \, 
g_2A_{\{s_3\}}, \dots, g_{k-1} A_\emptyset, \, g_{k-1}A_{\{s_k\}}, \, g A_\emptyset\dots\]
\end{lemma}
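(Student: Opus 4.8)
\emph{Proof sketch.} Regard the displayed list as one period of a bi-infinite edge path $\gamma$ in $\CG$: it runs from $A_\emptyset$ to $gA_\emptyset=g_kA_\emptyset$ in $2k$ edges, and we extend it to a bi-infinite path by declaring that the vertex $2k$ steps after $v$ along $\gamma$ is $g\cdot v$ (so $\gamma$ is $g$-periodic). First I would check that this really is an edge path. Within one period the consecutive pairs are $g_iA_\emptyset\subset g_iA_{\{s_{i+1}\}}$ and $g_iA_{\{s_{i+1}\}}=g_{i+1}A_{\{s_{i+1}\}}\supset g_{i+1}A_\emptyset$, which are edges of $\CG$; since $g$ acts on $\CG$ by cubical automorphisms, applying powers of $g$ shows all of $\gamma$ is a genuine edge path. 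Loxodromicity will then follow once we know $\gamma$ is a geodesic.

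The heart of the argument is to show that no two consecutive edges of $\gamma$ span a cube; by the criterion recalled just before the lemma, this forces the distance in $lk(p)$ between incoming and outgoing directions to be $\geq\pi$ at every vertex $p$ of $\gamma$, so $\gamma$ is a local geodesic and hence, $\CG$ being CAT(0), a bi-infinite geodesic. By $g$-periodicity it suffices to check the vertices of one period, and these come in two types. At a vertex $g_iA_\emptyset$ the two incident edges of $\gamma$ run to $g_iA_{\{s_i\}}$ and $g_iA_{\{s_{i+1}\}}$ (indices read cyclically, so at $A_\emptyset$ itself they run to $A_{\{s_k\}}$ and $A_{\{s_1\}}$); translating by $g_i^{-1}$ and invoking Lemma \ref{link iso to Gamma}, these are the vertices $s_i,s_{i+1}$ of $lk(A_\emptyset)\cong\tilde\G$, at distance $\tfrac{\pi}{2}d_\G(s_i,s_{i+1})\geq\pi$ precisely because $d_\G(s_i,s_{i+1})>1$. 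This is exactly where both hypotheses enter, the cyclic one $d_\G(s_1,s_k)>1$ handling the vertices $g^nA_\emptyset$. At a vertex $g_iA_{\{s_{i+1}\}}$ the two incident edges of $\gamma$ run "downward" to $g_iA_\emptyset$ and to $g_{i+1}A_\emptyset=g_is_{i+1}A_\emptyset$; writing $g_iA_{\{s_{i+1}\}}=g_is_{i+1}A_{\{s_{i+1}\}}$, we see both edges carry the label $s_{i+1}$. Since opposite edges of a square carry equal labels, adjacent edges carry distinct labels, so two edges with a common label cannot span a square; hence these two edges do not span a cube either.

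Once $\gamma$ is known to be a bi-infinite geodesic it is in particular an injective path, and by construction $g$ carries $\gamma$ onto itself, shifting the index along $\gamma$ by the full period $2k$. Since $2k\neq0$ and distinct indices give distinct points, $g$ fixes no point of $\gamma$, so $g$ acts on the line $\gamma$ by a nontrivial translation; that is, $g$ is loxodromic with axis $\gamma$. The one place that requires genuine care — and the step where the "obvious" approach of only looking at the link of $A_\emptyset$ fails — is the vertices $g_iA_{\{s_{i+1}\}}$: there both relevant edges point \emph{away} from the good vertex $A_\emptyset$, so its link gives no information, and one must instead use the hyperplane/edge labeling to rule out a common square.
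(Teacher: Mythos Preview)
Your proof is correct and follows essentially the same approach as the paper: both verify that no two consecutive edges of $\gamma$ span a cube, whence the edge path is a local (hence global) geodesic on which $g$ acts by translation. The only cosmetic difference is that at the vertices $g_iA_\emptyset$ you invoke the link isomorphism $lk(A_\emptyset)\cong\tilde\G$, whereas the paper argues directly with edge labels (edges labeled $s_i$ and $s_{i+1}$ with $d_\G(s_i,s_{i+1})>1$ cannot lie in a common cube); at the vertices $g_iA_{\{s_{i+1}\}}$ both arguments are identical.
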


\begin{proof}
Choose a sequence $s_i\in S$ as described in the lemma. The axis described in the lemma is a sequence of edges with labels 
\[\dots s_k, s_k, s_1, s_1, s_2,s_2\dots s_k,s_k\dots\]

Any pair of edges with the matching labels cannot be edges of a single cube. Similarily,  if $s_i$ and $s_{i+1}$ are not joined by an edge in $\G$ then edges in $\CG$ labeled $s_i$ and $s_{i+1}$ cannot be edges of the same cube. Thus this edge path is locally geodesic at each vertex and so forms a geodesic. The element $g$ acts by translation on this bi-infinite axis and so it must be loxodromic. 
\end{proof}

Notice that this lemma shows that if $\Gamma$ is not a clique then $\CG$ has infinite diameter, since if $s_1,s_2$ are not joined by an edge in $\G$, then $g=s_1s_2$ is loxodromic.  Conversely, if $\Gamma$ is a clique, then $\CG$ has finite diameter since every coset $gA_T$ is contained in $A_\G$, hence every vertex of $\CG$ lies in a cube containing the vertex $A_\G$.  Moreover, in this case, the action of $A_\G$ has a global fixed point.  

As shown by Godelle and Paris, the clique-cube complex can be used to reduce many conjectures about general Artin groups to the case of a single clique (see Section \ref{Additional_remarks} below).  However, the clique-cube complex is of little use in the case when $\G$ consists of a single clique.

 \section{Trivial Centers}\label{Trivial Centers}
 
 		 In this section, we will use the clique-cube complex to show that $\AG$ has trivial center for any graph $\G$ that is not a star of a single vertex. In other words, we require the following condition of $\G$: for any vertex $s\in \G$ there exists a vertex $t_s$ such that $d_\G(s,t_s)>1$. 
 
Irreducible, finite-type Artin groups are known to have center isomorphic to $\Z$.  It is conjectured that the only Artin groups with non-trivial center are either of finite type, or are direct products with at least one factor of finite type. If we look at the shape of the graphs defining such Artin groups (ignoring edge labels), we see that the graphs generating such groups are either cliques or joins with one factor a clique. These are exactly the graphs that are stars of a single vertex. 

To prove the full conjecture, however, one would also need to consider the labels on the graphs since not all cliques correspond to finite type Artin groups and not all graph-theoretic joins correspond to direct products of the associated groups.  

The proof  that $\AG$ has trivial center is based on two lemmas.  First, we show that for any graph $\G$, the clique-cube complex $\CG$ cannot be decomposed as a metric product of the form $Y\times \R$.  Second, we show that for any $\G$ that is not a star of a single vertex, the action of $\AG$ on $\CG$ is minimal, that is, it has no proper $\AG$-invariant convex subspace. From these facts, we will deduce that any central element must be trivial.


\begin{lemma}\label{not a product}
For any graph $\G$, the clique-cube complex cannot be decomposed as a metric product of the form $Y\times \R$. 
\end{lemma}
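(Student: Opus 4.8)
The plan is to show that a metric splitting $\CG \cong Y \times \R$ would force a very rigid structure on $\CG$ that contradicts the combinatorics of the clique-cube complex. The starting observation is that the $\R$-factor produces a ``direction'' in $\CG$: in a CAT(0) cube complex, a metric product decomposition $\CG = Y \times \R$ gives rise to a canonical class of parallel bi-infinite geodesics (the fibers $\{y\}\times\R$), and more usefully, it partitions the hyperplanes of $\CG$ into those ``transverse to'' the $\R$-factor and those ``contained in'' the $Y$-factor. Concretely, the set $\mathcal H_\R$ of hyperplanes crossing a geodesic of the form $\{y\}\times\R$ is infinite, and every two hyperplanes in $\mathcal H_\R$ are disjoint (parallel), since a geodesic in $\R$ crosses each perpendicular hyperplane exactly once and two such hyperplanes cannot cross. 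So the heart of the argument is: \emph{$\CG$ contains no infinite family of pairwise-disjoint hyperplanes.}

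First I would recall from Section~\ref{Cube Complex} that every hyperplane of $\CG$ is labeled by a generator $s \in S$, and is a translate of the standard hyperplane $H_s$ dual to the edge $A_\emptyset$–$A_{\{s\}}$. Since $S$ is finite, an infinite family $\{K_i\}$ of pairwise-disjoint hyperplanes must contain infinitely many hyperplanes with the same label $s$; write these as $g_i H_s$. Two hyperplanes of the same label $s$: I claim that $H_s$ and $gH_s$ are either equal or cross, never disjoint — or at least, that one cannot have infinitely many pairwise-disjoint ones. To see the relevant structure, note $H_s$ separates $\CG$ into two halfspaces, one containing $A_\emptyset$ and one containing $A_{\{s\}}$, and the vertex $A_{\{s\}}$ is fixed by the infinite group $A_{\{s\}} = \langle s\rangle$; translating, each $g_iH_s$ is ``centered'' at a vertex fixed by a conjugate of $\langle s \rangle$. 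I would analyze the edges dual to these hyperplanes: an edge dual to a label-$s$ hyperplane joins $hA_T$ to $hA_{T\cup\{s\}}$ with $s\notin T$. The key point is that if $K$ and $K'$ are disjoint label-$s$ hyperplanes, then walking along a geodesic from a vertex just on the $A_\emptyset$-side of $H_s$, one must be able to string together the dual edges; but consecutive label-$s$ edges never span a cube (Lemma~\ref{define lox}), so a geodesic can meet arbitrarily many of them — this is exactly the loxodromic axis of a power of $s$ if one is not careful. The correct statement to isolate is therefore not ``no infinite disjoint family'' in general, but that an $\R$-factor forces a single parallelism class of hyperplanes that is preserved by the whole group $\AG$ acting on $\CG$.

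So the cleaner route, which I would actually carry out, is: suppose $\CG = Y\times\R$. The $\AG$-action need not respect the splitting, but by the structure theory of CAT(0) spaces (the splitting $\CG = Y \times \R$ with the $\R$-factor being the maximal Euclidean factor is canonical, cf.\ Bridson–Haefliger), $\AG$ permutes the two factors, and since there is only one $\R$-factor, $\AG$ preserves the $\R$-factor and acts on it by isometries of $\R$, giving a homomorphism $\AG \to \mathrm{Isom}(\R)$. I would then use the vertices with infinite stabilizers: the stabilizer of $A_{\{s\}}$ is the infinite cyclic group $\langle s\rangle$ (or an infinite dihedral-type group), which fixes a point of $\CG$, hence fixes a point in the $\R$-factor, hence acts trivially (as a translation fixing a point is trivial) on $\R$. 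Since the conjugates of the generators $\langle s_i \rangle$ together generate $\AG$ — indeed, each $s_i$ itself stabilizes the vertex $A_{\{s_i\}}$ — the whole group $\AG$ acts trivially on the $\R$-factor. But then every point of $\CG$ is fixed setwise within its $\R$-fiber by $\AG$, so the $\R$-coordinate function is $\AG$-invariant; this means the $\R$-fibers are permuted and in fact the quotient $\CG/\AG$ contains a line's worth of the $\R$-direction, contradicting co-compactness of the action (the fundamental domain is a finite union of cubes, so bounded, yet it must surject onto the unbounded $\R$). That contradiction finishes the proof.

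I expect the main obstacle to be the first step: justifying that the $\R$-factor is canonical and hence $\AG$-invariant, and more importantly pinning down the claim that a point-stabilizer (a conjugate of $\langle s\rangle$) acts trivially on $\R$ rather than merely boundedly — one needs that an isometry of $\R$ fixing a point is either the identity or a reflection, and to rule out reflections one uses that $s$ has infinite order and a reflection has order $2$, while $s^2$ also stabilizes $A_{\{s\}}$ so would also have to be a reflection or trivial, forcing $s$ itself to act trivially. Handling the possibility that $\AG$ swaps the two factors of a product $Y\times\R$ (which can only happen if $Y$ is also isometric to $\R$, i.e.\ $\CG\cong\R^2$) is a degenerate case easily dispatched, since $\CG\cong\R^2$ would make $\AG$ virtually abelian, contradicting e.g.\ the existence of the loxodromic elements from Lemma~\ref{define lox} together with $\Z^2$ subgroups, or simply the fact that $\AG$ surjects onto $\WG$ which can be infinite and non-virtually-abelian; alternatively $\CG \cong \R^2$ forces $\tilde\G = lk(A_\emptyset)$ to be a circle of circumference $2\pi$, i.e.\ $\G$ a $4$-cycle, which one checks directly does not give $\CG\cong\R^2$.
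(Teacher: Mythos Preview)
Your approach is genuinely different from the paper's, which gives a purely local argument: pick a maximal clique $T$ and show that $lk(A_T)$ cannot be a metric suspension. If $lk(v)$ were a suspension, the suspension points lie in some simplices $\sigma_1,\sigma_2$, and the paper shows that every vertex of $\sigma_1\cup\sigma_2$ must then be joined by an edge to all but finitely many vertices of $lk(v)$. But every vertex of $lk(A_T)$ has the form $gA_{T\smallsetminus\{t\}}$ for some $t\in T$, and for each fixed $t$ the infinitely many vertices $t^nA_{T\smallsetminus\{t\}}$ are pairwise non-adjacent; hence no vertex of $lk(A_T)$ has cofinite valence. This uses nothing about the $\AG$-action and no structure theorem for CAT(0) spaces.

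Your global argument via the action has a real gap at the reflection step. You need each generator $s$ to act trivially on the $\R$-factor, but your justification does not work: that $s$ has infinite order in $\AG$ says nothing about the order of its image in $\mathrm{Isom}(\R)$, and the observation that $s^2$ ``would also have to be a reflection or trivial'' is entirely consistent with $s$ acting as a reflection (then $s^2$ acts trivially, which is one of the allowed options). So the image of $\AG$ in $\mathrm{Isom}(\R)$ could a priori be $\Z/2$, or even infinite dihedral: nothing you wrote excludes two generators not linked by any odd-labelled edge from mapping to reflections at distinct points of $\R$. The finite-image case is easy to patch (pass to the finite-index kernel, which still acts cocompactly), but the infinite-image case needs a genuine additional argument that you have not supplied. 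A secondary issue: the canonicity of the Euclidean factor you attribute to Bridson--Haefliger is not established there for non-proper spaces; for finite-dimensional cube complexes the right reference is Caprace--Sageev, and you would then need to handle a Euclidean factor $\R^n$ with $n\ge 2$, not just the $n\le 2$ cases you discuss.
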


\begin{proof}

Suppose that a CAT(0) cube complex $X$ can be decomposed as the metric product $Y\times \R$ and consider a vertex $v$ in $X$. The link of $v$ is a metric suspension of the set $lk(v,Y) =(Y  \times 0) \cap lk(v)$.    We will show that this suspension has characteristics that are not present in the links of a vertex in $\CG$.  

Let $x_1$ and $x_2$ be the suspension points in $lk(v)$ and suppose these lie in the interior of simplices $\sigma_1$ and $\sigma_2$.  We claim that as a simplicial complex, $lk(v)$ is the join of the subspace spanned by the vertices of $\sigma_1$ and $\sigma_2$ and the subspace spanned by vertices lying in $lk(v,Y)$.  In particular, any vertex in $\sigma_1$ or $\sigma_2$ is connected by an edge to all but finitely many vertices of $lk(v)$.  

First note that in any piecewise spherical flag complex with all edges of length $\pi/2$, the only vertices at distance $\pi /2$ from a point in the interior of a simplex $\sigma$ are vertices $w$ such that $\sigma \cup \{w\}$ spans a simplex.  Now suppose that $y$ is a vertex of $lk(v)$ which is not in either $\sigma_1$ or $\sigma_2$. 
Then $d_{lk(v)}(y,x_i)\geq \pi/2$ for  $i=1,2$.  Since $lk(v)$ is a suspension, this is possible only if $y \in lk(v,Y)$ and $d_{lk(v)}(y,x_i)=\pi/2$.  Hence $y$ must span a simplex with $\sigma_i$, which proves the claim.

We will now describe a vertex in $\CG$ which does not have this property. That is we would like to find a vertex $v\in\CG$ such that any vertex in $lk(v)$ is not connected by an edge to infinitely many vertices in $lk(v)$.

 Let $T$ be a maximal clique in $\G$, and consider the link of the vertex $A_T$. Any vertex in the link of $A_T$ corresponds to an edge in $\CG$ labeled with a generator $t\in T$. Thus the vertices in $lk(A_T)$ can be partitioned into subsets corresponding to the elements of $T$. Any two distinct vertices $gA_{T\smallsetminus\{t\}}$ and $hA_{T\smallsetminus\{t\}}$ in the same subset do not span a cube in $\CG$ and so cannot be connected by an edge in the link. Moreover, for any $t\in T$ the subset of vertices in the link corresponding to edges labled $t$ contains infinitely many distinct elements, one corresponding to each vertex $t^nA_{T\smallsetminus\{t\}}$, for any $n\in \Z$.   Thus every vertex in $lk(A_T)$ is not connected to infinitely many other vertices, and so $lk(A_T)$ cannot be a metric suspension and $\CG$ cannot be a metric product of the form $Y\times \R$.
\end{proof}

\begin{lemma}\label{minimal action} Suppose that $\G$ is a graph that is not a star of a single vertex. The action of $\AG$ on $\CG$ is minimal.  That is, for any point $x  \in \CG$, the convex hull of the orbit of $x$ is all of $\CG$.   
\end{lemma}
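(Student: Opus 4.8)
The plan is to prove the ostensibly stronger assertion that \emph{every non-empty closed convex $\AG$-invariant subset $Y\subseteq\CG$ equals $\CG$}; applied to $Y=\overline{\mathrm{conv}}(\AG\cdot x)$ — which is non-empty, closed, convex, and $\AG$-invariant since $\AG$ acts by isometries — this yields the lemma.

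The first move is a reduction that uses nothing about $\G$: \emph{if $A_\emptyset\in Y$, then $Y=\CG$}. The engine here is the fact, used already in Lemma~\ref{define lox}, that inside any single cube of $\CG$ the edges at a common vertex carry pairwise distinct labels; hence two edges sharing a vertex and carrying the \emph{same} label cannot span a cube, so they subtend an angle $\geq\pi$ in the relevant link. Consequently, for any $T\in\Scl$ and any $s\in T$, writing $T'=T\smallsetminus\{s\}$, the edge path $A_{T'}\to A_{T}\to sA_{T'}$ — whose two edges are both labeled $s$ — is locally geodesic at $A_T$, hence a geodesic. As $Y$ is $\AG$-invariant, $A_{T'}\in Y$ implies $sA_{T'}\in Y$, so convexity gives $A_T\in Y$. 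Starting from $A_\emptyset\in Y$ and inducting on $|T|$ puts $A_T\in Y$ for every $T\in\Scl$; by invariance $Y$ then contains every vertex of $\CG$, and since the CAT(0) convex hull of the vertices of a cube is the whole cube, $Y$ contains every cube, i.e.\ $Y=\CG$.

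It remains to show $A_\emptyset\in Y$, and this is where the hypothesis on $\G$ enters. Since $\G$ is not the star of a single vertex it is in particular not a clique, so for every $s\in S$ there is $t$ with $d_\G(s,t)>1$; by Lemma~\ref{define lox} the element $g_s=st$ is loxodromic with an axis $\ell_s$ running through $A_\emptyset$ and $A_{\{s\}}$. For any $y\in Y$ the orbit $\AG\cdot y$ contains $\{g_s^{\,n}y\}_{n\in\Z}$, and since $g_s^{\,n}y$ stays within $d(y,\ell_s)$ of $\ell_s$ while its nearest-point projection to $\ell_s$ runs off to infinity on both sides of the hyperplane $H_s$ dual to $[A_\emptyset,A_{\{s\}}]$, the orbit has points arbitrarily deep in each of the two open halfspaces of $H_s$. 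As every hyperplane of $\CG$ is a translate of some $H_s$, it follows that $Y$ is contained in no halfspace; hence the smallest convex subcomplex of $\CG$ containing $Y$ is $\CG$ itself, so $Y$ crosses every hyperplane and in particular meets the carrier of every $H_s$. That carrier is $\bigcup\{[A_\emptyset,A_T]:T\in\Scl,\ s\in T\}$, a bounded neighbourhood of $A_\emptyset$, and one checks — here is where ``$\G$ is not the star of a single vertex'' is used — that $\bigcap_{s\in S}\mathrm{carrier}(H_s)=\{A_\emptyset\}$. Combining the facts that $Y$ is convex and cobounded (the action is cocompact), that $Y$ meets each $\mathrm{carrier}(H_s)$, and that these finitely many carriers all contain $A_\emptyset$ and intersect only at $A_\emptyset$ — via a nearest-point-projection argument at $A_\emptyset$, or a Helly-type argument for the carriers — should force $A_\emptyset\in Y$.

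The step I expect to be the main obstacle is exactly this last implication: upgrading ``the convex set $Y$ meets each of the finitely many carriers $\mathrm{carrier}(H_s)$, all of which contain $A_\emptyset$'' to ``$A_\emptyset\in Y$''. This is the only place the hypothesis is genuinely needed — if $\G$ were a single vertex then $Y$ could be the $\AG$-fixed vertex $A_{\{s\}}$, and if $\G=\overline{\mathrm{star}}(s_0)$ then every interior point of $[A_\emptyset,A_{\{s_0\}}]$ lies in all the carriers — and it is also where Lemma~\ref{define lox} does its work, since without the loxodromics $g_s$ one could not trap $Y$ outside every halfspace. By contrast, the earlier reduction and the deduction ``$Y$ in no halfspace $\Rightarrow$ $Y$ crosses every hyperplane'' are routine.
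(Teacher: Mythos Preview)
Your first reduction --- if $A_\emptyset\in Y$ then $Y=\CG$ --- is correct and is exactly one half of the paper's argument (the paper phrases it as $\mathrm{Hull}(orb(A_R))\subseteq\mathrm{Hull}(orb(A_\emptyset))$ via the geodesic $A_{R\smallsetminus\{r\}},A_R,rA_{R\smallsetminus\{r\}}$).

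The second half, however, has a real gap, and you flag it yourself. Two concrete problems:
\begin{itemize}
\item Your description of $\mathrm{carrier}(H_s)$ as $\bigcup\{[A_\emptyset,A_T]:s\in T\}$ is wrong. The carrier propagates through squares: for any $t\in lk(s)$ the square on $A_\emptyset,A_{\{s\}},A_{\{t\}},A_{\{s,t\}}$ puts the edge $[A_{\{t\}},A_{\{s,t\}}]$ in $H_s$, and then the square on $tA_\emptyset,tA_{\{s\}},A_{\{t\}},A_{\{s,t\}}$ puts $tA_\emptyset$ in the carrier as well. Iterating, $\mathrm{carrier}(H_s)$ contains $gA_\emptyset$ for every $g\in A_{lk(s)}$, so it is unbounded and your computation of $\bigcap_s\mathrm{carrier}(H_s)$ needs rethinking.
\item Even granting that the carriers meet only at $A_\emptyset$, the implication ``$Y$ convex, $Y$ meets each carrier, hence $A_\emptyset\in Y$'' is not a theorem. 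Helly-type statements in CAT(0) cube complexes hold for families of halfspaces or of combinatorially convex subcomplexes, but $Y$ is an arbitrary (metrically) convex set; there is no reason a nearest-point projection to $A_\emptyset$ must land in $Y$. You would need an actual argument here, and none is given.
\end{itemize}

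The paper avoids all of this with a direct construction. Starting from $x=A_T$, pick $t\in T$ and (using the hypothesis) some $s\notin lk(t)$; set $Q=T\cap lk(s)$. Then the edge path
\[
A_T,\ A_Q,\ A_{Q\cup\{s\}},\ sA_Q,\ sA_T
\]
is checked to be a geodesic (consecutive edges never span a cube), so $A_Q$ lies on the geodesic between the orbit points $A_T$ and $sA_T$, hence $A_Q\in\mathrm{Hull}(orb(A_T))$. Since $t\in T\smallsetminus Q$, we have $|Q|<|T|$, and iterating gives $A_\emptyset\in\mathrm{Hull}(orb(A_T))$. For $x$ interior to a cube $[A_T,A_R]$, pick $s\in R\smallsetminus T$; then $C$ and $sC$ share a codimension-one face and the geodesic from $x$ to $sx$ meets that face, reducing the cube dimension. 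This is short, explicit, and uses the hypothesis in exactly one place (the existence of $s\notin lk(t)$), whereas your route trades that single geodesic for a global structural claim you have not established.
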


\begin{proof}  First we prove this lemma in the case where $x$ is a vertex of the cube complex. Denote the convex hull of the orbit of $x$ by Hull($orb(x)$).  We may assume that $x$ is of the form $A_T$ for some clique $T$ in $\G$. We will show that Hull$(orb(A_\emptyset))\subset$ Hull$(orb(A_T))$. 

Fix $t\in T$ and, using the fact that $\G$ is not the star of a single vertex, choose $s$ not in the link of $t$. Let $Q=T\cap lk(s)$ and consider the path
\[A_T, A_Q,A_{Q\cup \{s\}},sA_Q,sA_T\]

This path is geodesic. Between $A_T$ and $A_Q$ the path crosses only hyperplanes labeled by generators is $T\smallsetminus Q$, none of which are in $lk(s)$. Between $A_Q$ and $A_{Q\cup \{s\}}$ only crosses the hyperplane labeled by s, so the above path is locally geodesic at $A_Q$. Similarly, it is locally geodesic at $A_{Q\cup \{s\}}$ as  approaches and leaves this vertex by crossing hyperplanes labeled by $s.$ The part of the path passing through $sA_Q$ is a translation of the path through $A_Q$ and so the path is locally geodesic at this vertex as well. 

This shows that $A_Q$ lies in the convex hull of the orbit of $A_T$ and hence  Hull$(orb( A_Q))\subset$ Hull$(orb( A_T))$.  The fact that $\G$ is not the star of a single vertex guarantees that $Q\subsetneq T$. This process can be repeated for successively smaller cliques to show that  Hull$(orb( A_\emptyset))\subset$ Hull$(orb( A_T))$.  

Next, we choose an arbitrary $R\in \Scl$ and show that Hull$(orb( A_R))\subset$ Hull$(orb( A_\emptyset))$ by induction on $|R|$. The two edges joining $A_{R\smallsetminus\{r\}}$ to $A_R$ to $rA_{R\smallsetminus\{r\}}$ are geodesic so 
Hull$(orb( A_R))\subseteq$
Hull$(orb( A_{R\smallsetminus\{r\}}))$. 
By induction this means that Hull$(orb( A_R)) \subseteq$ Hull$(orb( A_\emptyset))$.
Putting together the above arguments we see that  Hull$(orb( A_T))$ contains every vertex in $\CG$ and so must be the whole cube complex $\CG$. Thus for any clique $T$ and any vertex $A_T$, Hull$(orb( A_T))=\CG$

Now we generalize to the case where $x$ lies in the interior of a $k$-cube $C$. Translating by an element of $\AG$ if necessary, we may assume $C$ is spanned by vertices $A_T$ and $A_R$, with $T \subseteq R$.  Let $k = |R - T|$.  We proceed by induction on $k$.

For $k=0$, $x=C$ is a vertex, so this follows from the above argument.

Say $k \geq 1$.  Let $s \in R-T$.  Then the cubes $C$ and $sC$ share a codimension one face
(spanned by $A_s$ and $A_R$).  The geodesic from $x$ to $sx$ clearly passes through this face,
say at the point $y$.  Then $y$ lies in Hull$(orb( x))$, so Hull$(orb( y)) \subseteq$ Hull$(orb( x))$. By induction, Hull$(orb( y)) = \CG$.
\end{proof}


Now we bring together the above results to show that the center of the group is trivial
\begin{theorem}\label{thm: trivial center}  Assume that $\G$ is not the star of a single vertex.  Then the center of $\AG$ is trivial.
\end{theorem}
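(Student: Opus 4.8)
The plan is to combine the two preceding lemmas via a standard fact about isometric actions on CAT(0) spaces: if a group $G$ acts minimally by isometries on a CAT(0) space $X$, then any central element $z \in Z(G)$ either acts trivially or the space splits as a product $Y \times \R$ with $z$ acting as a translation on the $\R$-factor. More precisely, a central element commutes with the whole group, so the displacement function $x \mapsto d(x, zx)$ is $G$-invariant; by minimality (Lemma \ref{minimal action}) its infimum is attained on all of $\CG$, i.e. $z$ is either elliptic with fixed-point set all of $\CG$ (hence trivial, since the action is faithful — the stabilizer of $A_\emptyset$ is the trivial subgroup $A_\emptyset = \{1\}$) or semisimple hyperbolic with $\mathrm{Min}(z) = \CG$. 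In the latter case the Min-set of a hyperbolic isometry with full Min-set splits off an $\R$-factor along the axis direction, giving $\CG \cong Y \times \R$, which is exactly what Lemma \ref{not a product} forbids.

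First I would recall the precise statement I want to invoke (this is in Bridson--Haefliger, II.6): for a semisimple isometry $z$ of a CAT(0) space $X$, the min-set $\mathrm{Min}(z)$ is closed convex and isometric to $Y \times \R$ with $z$ acting as translation on the $\R$ factor (when $z$ is hyperbolic). I also need that isometries of a finite-dimensional CAT(0) cube complex preserving the cubical structure are semisimple — this follows because $\CG$ is a complete CAT(0) space and, more concretely, cube-complex automorphisms have rational translation length; alternatively one can argue directly with the clique-cube complex. Then: let $z \in Z(\AG)$. For any $g \in \AG$ and $x \in \CG$, $d(gx, z(gx)) = d(gx, g(zx)) = d(x, zx)$, so the displacement function $f(x) = d(x,zx)$ is constant on $\AG$-orbits and hence, being convex, constant on $\mathrm{Hull}(orb(x)) = \CG$ by Lemma \ref{minimal action}. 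Thus $f$ is constant, say equal to $\ell \geq 0$, and $\mathrm{Min}(z) = \CG$.

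If $\ell = 0$ then $z$ fixes every point of $\CG$; in particular $z$ fixes $A_\emptyset$, and since the stabilizer of $A_\emptyset$ is the trivial subgroup (as $A_\emptyset = \{1\}$ and the action is by left multiplication, $z \cdot A_\emptyset = A_\emptyset$ forces $z = 1$). If $\ell > 0$ then $z$ is hyperbolic with $\mathrm{Min}(z) = \CG$, so by the splitting theorem for min-sets $\CG \cong Y \times \R$, contradicting Lemma \ref{not a product}. Hence $\ell = 0$ and $z = 1$, proving $Z(\AG)$ is trivial.

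I expect the main obstacle to be the semisimplicity step: one must justify that a central element acts as a semisimple isometry so that the min-set/axis machinery applies. The cleanest route is to cite the general fact that a cellular isometry of a finite-dimensional CAT(0) cube complex is semisimple (Haglund, or Bridson's result that cocompactly-acting groups consist of semisimple isometries — but here the action is cocompact, which already gives semisimplicity of every element via Bridson--Haefliger II.6.10(2), since $\AG$ acts cocompactly by cellular isometries on the complete CAT(0) space $\CG$). Everything else is routine: the convexity and $\AG$-invariance of the displacement function are immediate, and the faithfulness needed at the end is transparent from the description of vertex stabilizers. One minor point to be careful about is that $\CG$ is not a proper metric space (it has infinite-valence vertices), so I should make sure the results I cite do not require properness — the semisimplicity-from-cocompactness statement and the min-set splitting theorem both hold for general complete CAT(0) spaces, so this is fine.
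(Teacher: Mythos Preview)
Your argument is correct and follows essentially the same route as the paper's: use minimality (Lemma~\ref{minimal action}) to show that a central $z$ has $\mathrm{Min}(z)=\CG$, rule out the hyperbolic case via the product splitting of the min-set contradicting Lemma~\ref{not a product}, and dispose of the elliptic case via the trivial stabilizer of $A_\emptyset$. One small correction to your side remark: Bridson--Haefliger II.6.10(2) \emph{does} require properness of the action, which fails here; the correct reference for semisimplicity is the result on cellular isometries of piecewise-Euclidean complexes (II.6.6, which the paper cites) or Haglund's theorem---though in fact your direct convexity argument already forces the displacement function to be constant, so $z$ is a Clifford translation and semisimplicity is a consequence rather than an input.
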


\begin{proof}  Suppose for contradiction that $z$ is a central element.  We first observe that the minset of $z$ is all of $\CG$.  Recall that 
\[min(z):=\{x\in \CG \mid d(x,zx)=
\inf_{y\in \CG}
d(y,zy)\}\]

 To see that $min(z)=\CG$, note that if $x$ is in the minset, then so is every point in the orbit of $x$, since $d(x,zx)=d(gx,gzx)=d(gx,zgx)$.  The minset is convex, so it follows from Lemma \ref{minimal action} that it contains all of $\CG$.  Such an element is called a Clifford Translation (\cite{Br-Ha}, II.6.8).

Now we consider two cases. Any simplicial isometry on a CAT(0) cube complex must either be elliptic, with a fixed point, or loxodromic (\cite{Br-Ha}, II.6.6).   
 
 If $z$ is loxodromic, then the minset of $z$ decomposes as a metric product,
$\CG=min(z)= \R \times Y$. However by Lemma \ref{not a product}, $\CG$ cannot decompose in this way, providing a contradiction. 

If $z$ is elliptic, it fixes a point, hence it must fix all of $\CG$.  In particular, it fixes the vertex $A_\emptyset$.  Since $A_\emptyset$ has trivial stabilizer, this implies that $z =1$.
\end{proof}

 \section{Acylindrical Hyperbolicity}
 		
 A hyperbolic group is a group that acts properly, cocompactly by isometries on a hyperbolic metric space. 
These groups were introduced by Gromov in the 1980s and have been extensively studied.  More recently, there has been an interest in extending some of these ideas and techniques to more general groups.   One such class of groups are acylindrically hyperbolic groups.  
 
\begin{definition}
The action of a group $G$ on a metric space $X$ is called \emph{acylindrical} if for every $\epsilon > 0$ there exist 
$R, N > 0$ such that for every two points $x, y$ with $d(x, y) \geq R$, there are at most $N$ elements $g \in G$ satisfying $d(x,gx) \leq \epsilon$ and $d(y,gy) \leq \epsilon$.  We say $G$ is \emph{acylindrically hyperbolic} if it is not virtually cyclic and has an acylindrical action  with unbounded orbits on a hyperbolic metric space.
\end{definition}

For an in depth discussion of this property, its consequences, and the types of groups it encompasses, we recommend Osin's  article \cite{Os}. For a discussion of acylindrical hyperbolicity in the context of CAT(0) cube complexes, we recommend the paper of Genevois \cite{Gen}.  In this section, we discuss acylindrical hyperbolicity for Artin groups.
 
In \cite{Ch-Ma}, Chatterji and Martin establish criteria for a groups acting on a CAT(0) cube complex to be acylindrically hyperbolic.  
Recall that the action of a group $G$ on a CAT(0) cube complex $X$ is \emph{essential} if no orbit remains at bounded distance from a half-space of $X$.  The action is \emph{non-elementary} if it does not admit a finite orbit in $X \cup \partial X$, where $\partial X$ is the visual boundary of $X$. 

\begin{theorem}[Chatterji and Martin]  \label{Chatterji and Martin}
 Let G be a group acting essentially and non-elementarily on an irreducible finite-dimensional CAT(0) cube complex $X$. If there exist two hyperplanes in $X$ whose stabilizers intersect in a finite subgroup, then G is acylindrically hyperbolic.
\end{theorem}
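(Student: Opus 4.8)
Throughout, write $\partial X$ for the visual boundary of $X$.

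The plan is to extract from the hypotheses a single loxodromic element whose action is \emph{weakly properly discontinuous} (WPD), and then to feed this into the standard machinery of hyperbolically embedded subgroups. Recall Osin's characterization: a group that is not virtually cyclic is acylindrically hyperbolic as soon as it contains a proper infinite hyperbolically embedded subgroup. The hypothesis that the action of $G$ on $X$ is non-elementary already forces $G$ not to be virtually cyclic, since a virtually cyclic group always has a finite orbit in $X\cup\partial X$. So it suffices to produce a single loxodromic WPD element.

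First I would apply the Caprace--Sageev rank rigidity theorem for CAT(0) cube complexes. Because $X$ is finite dimensional, the action is essential and non-elementary, and $X$ is irreducible (it does not split as a nontrivial product), rank rigidity yields a pair of \emph{strongly separated} hyperplanes $\hat h_1,\hat h_2$ (no hyperplane crosses both) together with an element $g\in G$ that skewers the pair. Such a $g$ is a rank-one, i.e.\ contracting, isometry: it has a combinatorial axis that is a contracting geodesic, and along this axis one crosses a bi-infinite chain of pairwise strongly separated hyperplanes $\{g^{n}\hat h_1\}_{n\in\Z}$. This chain is exactly the combinatorial input that makes the coarse-geometry arguments (bounded geodesic image, no shearing) available.

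The heart of the argument is to upgrade $g$ to a WPD element using the hypothesis that two hyperplanes $H_1,H_2$ have $\operatorname{Stab}(H_1)\cap\operatorname{Stab}(H_2)$ finite. Using essentiality one can translate $H_1$ and $H_2$ by elements of $G$ so that they sit close to the axis of $g$, nested between two hyperplanes of the strongly separated chain. Now fix a basepoint $x$ on the axis and set $y=g^{N}x$ for $N$ large, and suppose $k\in G$ satisfies $d(x,kx)\le\epsilon$ and $d(y,ky)\le\epsilon$. Strong separation forces $k$ to map the chain of strongly separated hyperplanes crossed between $x$ and $y$ to a chain that coarsely coincides with it, with no room to shear; after multiplying $k$ by a bounded power of $g$ we may therefore assume $k$ stabilizes two prescribed hyperplanes of the chain that are translates of $H_1$ and $H_2$. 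Thus $k$, up to the bounded power of $g$ and bounded combinatorial ambiguity, lies in a conjugate of the finite group $\operatorname{Stab}(H_1)\cap\operatorname{Stab}(H_2)$. Hence only finitely many such $k$ exist, which is precisely the WPD condition for $g$ acting on $X$.

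Finally, a contracting element that acts as a WPD element for an action on a geodesic space generates a cyclic hyperbolically embedded subgroup (this is standard for Morse/contracting elements; cf.\ Sisto, Bestvina--Bromberg--Fujiwara), so $\langle g\rangle\hookrightarrow_h G$; since $G$ is not virtually cyclic this subgroup is proper and infinite, and $G$ is acylindrically hyperbolic by Osin's theorem. I expect the main obstacle to be the WPD verification: precisely controlling how an element $k$ that nearly fixes a long sub-segment of the axis can act on the associated chain of hyperplanes, and isolating the place where strong separation confines $k$ to a bounded neighbourhood of $\langle g\rangle\cdot\bigl(\operatorname{Stab}(H_1)\cap\operatorname{Stab}(H_2)\bigr)$. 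Positioning the translated copies of $H_1,H_2$ correctly relative to the axis, using only essentiality and irreducibility, is the technical crux; once the combinatorics is set up, the finiteness is immediate from the hypothesis.
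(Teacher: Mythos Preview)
This theorem is not proved in the paper; it is quoted from Chatterji--Martin \cite{Ch-Ma} and used as a black box in the proof of Theorem~\ref{thm: acyl hyp}. The only hint the paper offers is the remark immediately following the statement: the argument ``uses (a variant of) the Bestvina--Bromberg--Fujiwara WPD condition'' to produce an acylindrical action on some hyperbolic space. Your overall strategy---extract a rank-one element via Caprace--Sageev rank rigidity, verify a WPD condition, and invoke the Bestvina--Bromberg--Fujiwara/Sisto machinery---is therefore in the right spirit, and there is nothing further in the present paper to compare it against.

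That said, your sketch has a real gap at precisely the point you flag. The strongly separated chain $\{g^{n}\hat h_{1}\}_{n\in\Z}$ furnished by rank rigidity consists of $G$-translates of $\hat h_{1}$, not of the given hyperplanes $H_{1},H_{2}$. So when you argue that an element $k$ nearly fixing a long axial segment must, after a bounded power of $g$, stabilize two hyperplanes of the chain, what you obtain is $k\in\operatorname{Stab}(g^{a}\hat h_{1})\cap\operatorname{Stab}(g^{b}\hat h_{1})$, and nothing in the hypotheses says this is finite. The finiteness assumption concerns only $\operatorname{Stab}(H_{1})\cap\operatorname{Stab}(H_{2})$. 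Saying you will ``translate $H_{1}$ and $H_{2}$ so that they sit close to the axis, nested between two hyperplanes of the strongly separated chain'' does not bridge this: proximity to the axis does not force $k$ to stabilize those translates, nor does it insert them into the chain that actually governs the combinatorics of $k$. The sentence ``$k$ stabilizes two prescribed hyperplanes of the chain that are translates of $H_{1}$ and $H_{2}$'' conflates two different families of hyperplanes. The missing idea is to construct the contracting element so that its axis genuinely crosses $G$-translates of $H_{1}$ and $H_{2}$ in a strongly separated pattern (this is where essentiality, irreducibility, and double-skewering arguments are really used); without that construction, the WPD verification never connects to the one finiteness hypothesis you are given.
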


We remark that the theorem does not require that $X$ be hyperbolic and does not imply that the action of $G$ on $X$ is acylindrical.  Rather it uses (a variant of) the Bestvina-Bromberg-Fujiwara's  ``WPD condition" \cite{BeBrFu, Mar} to show that $G$ acts acylindrically on \emph{some} hyperbolic space.  

Under the assumption that the defining graph $\G$ has diameter at least 3, Chatterji and Martin show that the action of an Artin group $\AG$ of FC-type on its Deligne complex satisfies the conditions of their theorem, and hence $\AG$ is acylindrically hyperbolic.  We strengthen their results by getting rid of the FC-type condition, and only assuming that $\G$ is not a join. To do this, we will verify that the action of $\AG$ on the clique-cube complex satisfies the hypotheses of their theorem.  

By Corollary \ref{not join implies irreducible}, $\CG$ is irreducible. We begin by showing that the action is essential and non-elementary.

\begin{lemma}\label{non-elementary action}
If $\G$ is not a join and has at least two vertices, then the action of $\AG$ on $\CG$ is essential and non-elementary. 
\end{lemma}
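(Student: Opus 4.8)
The plan is to verify the two conditions separately, using the loxodromic elements constructed in Lemma \ref{define lox} and the structure of hyperplanes described in Section \ref{Cube Complex}.

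\textbf{Essentiality.} I would argue that for each generator $s \in S$, the hyperplane $H_s$ dual to the edge $[A_\emptyset, A_{\{s\}}]$ bounds two half-spaces, neither of which stays within bounded distance of an orbit. Since $\G$ is not a clique (it is not even a join, and has $\geq 2$ vertices), Lemma \ref{define lox} gives a loxodromic element $g = s_1 s_2 \cdots s_k$ whose axis is an explicit bi-infinite edge path. Because $\G$ is not a join, for any $s$ I can find a vertex $t$ with $d_\G(s,t) > 1$, and then $g = st$ is loxodromic with axis crossing hyperplanes labeled $s$ and $t$ alternately. The axis of such a $g$ crosses a translate of $H_s$ and then travels arbitrarily far into both half-spaces on either side; translating by powers of $g$ moves $A_\emptyset$ arbitrarily deep into each half-space bounded by $H_s$. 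Since every hyperplane is an $\AG$-translate of some $H_s$, and the orbit $\AG \cdot A_\emptyset$ therefore meets both sides of every hyperplane at unbounded depth, the action is essential.

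\textbf{Non-elementarity.} I must show $\AG$ has no finite orbit in $\CG \cup \partial\CG$. A finite orbit in $\CG$ would give (by averaging / taking the circumcenter of the finite orbit, using CAT(0) geometry) a global fixed point for a finite-index subgroup, hence a bounded orbit, contradicting the existence of a loxodromic element. For the boundary: suppose $\AG$ had a finite orbit in $\partial\CG$. Passing to a finite-index subgroup $H$ fixing a point $\xi \in \partial\CG$, standard CAT(0) cube-complex facts imply $H$ either fixes a second boundary point and stabilizes a geodesic line — forcing $\CG$ to split as $Y \times \R$, contradicting Lemma \ref{not a product} — or $H$ fixes a unique point of $\partial\CG$, which is incompatible with $H$ containing two loxodromics with distinct, ``independent'' axes. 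Concretely, since $\G$ is not a join, I can produce two loxodromic elements via Lemma \ref{define lox} whose axes do not share endpoints in $\partial\CG$ (e.g.\ built from different non-adjacent pairs, or conjugates thereof), and a group containing two loxodromics with four distinct endpoints cannot fix any point of the boundary; this is where the ``not a join'' hypothesis does the real work, since a join would allow an $\R$-factor and a global fixed point at infinity.

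\textbf{Main obstacle.} The delicate step is ruling out a fixed point at infinity: establishing that the loxodromics coming from Lemma \ref{define lox} have genuinely independent axes (distinct pairs of endpoints in $\partial\CG$) requires a little care, since a priori two different words $s_1\cdots s_k$ and $t_1 \cdots t_\ell$ could be conjugate or have parallel axes. I expect this is handled by choosing the two loxodromics to act hyperbolically on ``different'' hyperplanes — one crossing $H_s$-translates, the other avoiding them — so their axes cross incompatible sets of hyperplanes and hence cannot be asymptotic; combined with Lemma \ref{not a product} to exclude the case of a single pair of fixed endpoints, this closes the argument.
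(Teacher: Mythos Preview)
Your essentiality argument is correct and is essentially the paper's, carried out one hyperplane type at a time rather than with a single loxodromic; nothing to add there.

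The non-elementarity argument, however, has a genuine gap. The assertion that ``a group containing two loxodromics with four distinct endpoints cannot fix any point of the boundary'' is false for CAT(0) spaces in general: a loxodromic isometry can fix many boundary points beyond the two endpoints of its axis (think of a translation of $\R^2$, or a hyperbolic isometry acting on $\Hy^2 \times \R$). What is needed is that your loxodromics are \emph{rank one}---their axes bound no half-flat---so that each one fixes exactly the two endpoints of its axis and nothing else. Your elements $g = st$ built from a single non-adjacent pair need not have this property: if $s$ and $t$ share a common neighbour $u$ (which can certainly happen even when $\G$ is not a join, e.g.\ a $5$-cycle), hyperplanes of type $u$ may run parallel to the axis of $st$. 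The paper sidesteps this by using connectedness of the complement graph $\G^c$ to build a single word $g = s_1\cdots s_k$ that visits \emph{every} vertex of $\G$; the axis $\ell_g$ then crosses hyperplanes of every type, so no hyperplane of $\CG$ can be parallel to it, and this is what forces $g$ to be rank one. The Flat Strip theorem then gives that $g$ fixes only its two axis endpoints.

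Your proposed dichotomy via Lemma \ref{not a product} is also miswired: a finite-index subgroup fixing two boundary points does not force a splitting $\CG \cong Y \times \R$. That conclusion would require a Clifford translation (an element whose minset is all of $\CG$), which you do not have. The paper does not invoke Lemma \ref{not a product} here at all. Instead, once $g$ is known to be rank one, the paper produces a second rank-one loxodromic $h = s_1 g = s_1^2 s_2 \cdots s_k$ and checks by an explicit local-geodesic argument that the four endpoints of $\ell_g$ and $\ell_h$ in $\partial\CG$ are pairwise distinct (the two axes share a segment and then diverge at both ends along edges that do not span a cube). A finite orbit on $\partial\CG$ would force some power of $g$ and some power of $h$ to share a fixed boundary point, contradicting this.
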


\begin{proof}
Let $\G^c$ be the complement graph to $\G$, that is the graph with vertex set $S$ and an edge between two vertices $s_1$ and $s_2$ if and only if there is no edge between $s_1$ and $s_2$ in $\G$. 
The complement $\G^c$ is a connected graph because $\G$ is not a join, so we can find a sequence $s_1,\dots, s_k\in S$ such that $d_\G(s_i,s_{i+1})> 1$, $d_\G(s_1,s_k)> 1$.  Moreover, we can choose the sequence so that it passes through every vertex of $\G$ at least once, and, as $\G$ is not a join, $\cap_i lk(s_i)=\emptyset$. 
By Lemma \ref{define lox}, $g=s_1\cdots s_k$ is a loxodromic isometry whose axis $\ell_g$ is an edge path.  

 Now we establish some properties of this element $g$. For an arbitrary generator $s\in S$, let $H_s$ denote the hyperplane intersecting the edge between $A_\emptyset$ and $A_{\{s\}}$.  Since the cubical neighborhood of $A_\emptyset$ is a fundamental domain for the action of $\AG$, every hyperplane in $\AG$ is a translate of some $H_s$.  We will say that such a hyperplane is of type $s$. Notice that a hyperplane of type $s$ only crosses hyperplanes of type $t$ for $t\in lk(s)$. In particular, if we consider the hyperplanes that cross the edge path $\ell_g$, we see that any hyperplane of type $s_i$ cannot cross any hyperplane of type $s_{i+1}$. This shows that  the edge path $\ell_g$ has the following properties.
\begin{enumerate}
\item No two hyperplanes crossed by $\ell_g$ intersect each other.
\item For any point $x \in \ell_g$, the segment of $\ell_g$ from $x$ to $gx$ crosses a hyperplane of type $s$ for every vertex $s$ of $\G$.  In particular, no hyperplane $H$ of $X$ can intersect all of the hyperplanes crossed by $[x,gx]$. 
\end{enumerate}
It follows from the last property, that $\ell_g$ is not parallel to any hyperplane in $X$ and hence cannot bound a half flat.  That is, $g$ is rank one.

We now show that the action is essential.  The action is cocompact, so it suffices to show that each hyperplane is essential, that is that both halfspaces of any hyperplane contain points arbitrarily far from the hyperplane.  By property (1), this is clear for any hyperplane that crosses $\ell_g$.  By property (2), every hyperplane $H$ is a translate of some hyperplane that crosses $\ell_g$, so the same holds for every $H$.

Next we show that the action is non-elementary.
The existence of loxodromic elements means that there are no points in $\CG$ with finite orbit. It remains to show there are no points with finite orbit in $\partial \CG$. 

It is well known for actions on proper CAT(0) spaces, that a rank one element fixes only two points on the boundary, namely the endpoints of its axis.  While $\CG$ is not proper, a similar argument applies.  For suppose there was another fixed point, $z$, on the boundary, and let $\gamma$ be a geodesic from z to $\ell_g$, intersecting the axis at only one point. If $z=gz,$ then $\gamma$ and $g\gamma$ are asymptotic, and by the Flat Strip theorem (see \cite{Br-Ha}, II.2.13) the convex hull of $\gamma$ and $g\gamma$ embedded into $\R^2$. Putting together the flat strips corresponding to $g^n\gamma$, we see that in this case $\ell_g$ must bound a half flat. 

Now set $h=s_1g=(s_1)^2s_2 \dots s_k$.  By the exactly the same argument as for $g$, one can show that $h$ is a rank one loxodromic element with axis given by an edge path $\ell_h$, and its only fixed points on $\partial \CG$ are the endpoints of this axis.  

Suppose there exists a finite orbit in $\partial \CG$.  Then some power of $g$ and some power of $h$ must fix the same point.  But powers of $g$ can only fix the endpoints of $\ell_g$ and powers of $h$ can only fix endpoints of $\ell_h$, so it suffices to show that these endpoints are all distinct. 

Let $x=A_\emptyset$ and $y=A_{\{s_1\}}$.  Note that $g^{-1}y=h^{-1}y$.  The axes $\ell_g$ and $\ell_h$ intersect along the segment from $[g^{-1}y, y]$.  
Let $\ell_g^+$ and $\ell_h^+$ denote the positive rays emanating from $y$.  These two rays begin with the edges $[y, s_1x]$ and $[y, s_1^2x]$, respectively.  Since these two edges do not span a cube, the two rays $\ell_g^+$ and $\ell_h^+$ combine to form a bi-infinite geodesic, and hence they must have distinct endpoints.  
Similarly, the negative rays $\ell_g^-$ and $\ell_h^-$ emanating from $g^{-1}y$ ($=h^{-1}y$) begin with 
$[g^{-1}y,g^{-1}x]$ and $[h^{-1}y,h^{-1}x]$ respectively, and combine to form a bi-infinite geodesic (see Figure \ref{fig:Axes1}).  It follows that the four endpoints of $\ell_g$ and $\ell_h$ are all distinct.  

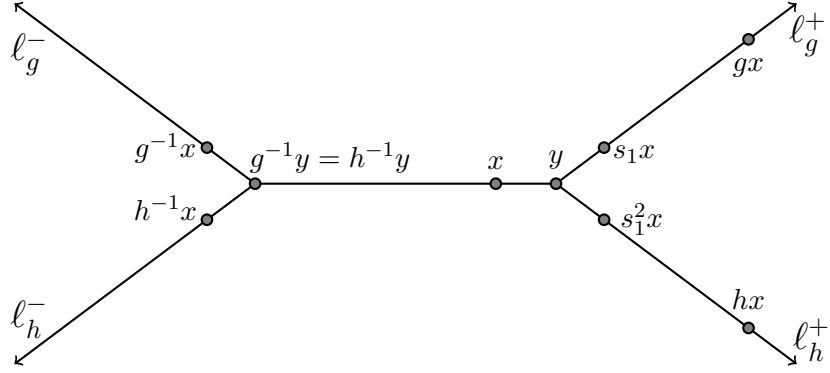
\begin{figure}
 
\begin{tikzpicture}[thick, scale=.8]

  						 \draw (0,0)--(5,0);
    								
    					\draw[->]	 (0,0)--(-4,3);
    					\draw[->] (0,0)--(-4,-3);
    					\draw[->] (5,0)--(9,3);
    					\draw[->] (5,0)--(9,-3);


		\draw (0,0) node[vertex,label={[shift={(1,0.05)}]$g^{-1}y=h^{-1}y$}]{};	
		\draw (5,0) node[vertex,label={[shift={(0,0.1)}]$y$}]{};		
		\draw (4,0) node[vertex,label={[shift={(0,0.1)}]$x$}]{};	
		
		\draw (29/5,3/5) node[vertex,label={[shift={(0.4,-0.3)}]$s_1 x$}]{};
		\draw (29/5,-3/5) node[vertex,label={[shift={(0.5,-0.3)}]$s_1^2 x$}]{};	
		\draw (-4/5,-3/5) node[vertex,label={[shift={(-.55,-0.1)}]$h^{-1} x$}]{};
	    \draw (-4/5,3/5) node[vertex,label={[shift={(-0.55,-0.3)}]$g^{-1} x$}]{};
	    
	    \draw (41/5,12/5) node[vertex,label={[shift={(0,-0.6)}]$g x$}]{};	
	    \draw (41/5,-12/5) node[vertex,label={[shift={(0,.15)}]$h x$}]{};

		\draw (9,3) 	node[label={[shift={(0.15,-0.75)}]{{\Large $\ell_g^+$}}}]{};	 
		\draw (9,-3) 	node[label={[shift={(0.2,0)}]{{\Large $\ell_h^+$}}}]{};
		\draw (-4,-3) 	node[label={[shift={(0.2,0.3)}]{{\Large $\ell_h^-$}}}]{};	
		\draw (-4,3) 	node[label={[shift={(0.2,-1)}]{{\Large $\ell_g^-$}}}]{};	   				
\end{tikzpicture}
\caption{Axes of the loxodromic elements $g$  and $h$ as defined in the proof of Lemma \ref{non-elementary action}, where $x=A_\emptyset$ and $y=A_{\{s_1\}}$}
\label{fig:Axes1}
\end{figure}

This proves that  there is no finite orbit in  $\CG \cup \, \partial \CG$ and the action of $\AG$ is non-elementary.
\end{proof}

\begin{theorem}\label{thm: acyl hyp}
Any Artin group $\AG$ whose defining graph $\G$ is not a join and has at least two vertices is acylindrically hyperbolic.  
\end{theorem}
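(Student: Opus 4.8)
The strategy is to verify the hypotheses of the Chatterji--Martin criterion (Theorem~\ref{Chatterji and Martin}) for the action of $\AG$ on $\CG$. Three of the four ingredients are already in place: $\CG$ is finite-dimensional since $\G$ is finite (every clique has at most $|S|$ vertices), it is irreducible since $\G$ is not a join (Lemma~\ref{not join implies irreducible}), and the action is essential and non-elementary by Lemma~\ref{non-elementary action}. (In particular $\AG$ is not virtually cyclic; concretely, a non-join on at least two vertices is not complete, so $\G$ has a non-edge $\{s,t\}$ and $A_{\{s,t\}}=\langle s,t\rangle$ is a free group of rank two inside $\AG$.) Hence it remains only to produce two hyperplanes of $\CG$ whose $\AG$-stabilizers intersect in a finite subgroup.

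For this I would use the rank one loxodromic element $g=s_1s_2\cdots s_k$ built in the proof of Lemma~\ref{non-elementary action}, together with the two properties of its axis $\ell_g$ recorded there: the hyperplanes crossed by $\ell_g$ are pairwise disjoint, and no hyperplane of $\CG$ crosses every hyperplane crossed by a single period $[x,gx]$ (so $\ell_g$ is a contracting line). Let $\hat h$ be the hyperplane dual to the first edge of $\ell_g$ leaving $A_\emptyset$, and put $\hat h'=g^{N}\hat h$ for a suitably large $N$; then $\hat h$ and $\hat h'$ are both crossed by $\ell_g$, hence disjoint. The claim to prove is that $\mathrm{Stab}_{\AG}(\hat h)\cap\mathrm{Stab}_{\AG}(\hat h')$ is finite, which closes the argument by Theorem~\ref{Chatterji and Martin}.

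To see the claim, let $h$ fix both $\hat h$ and $\hat h'=g^{N}\hat h$. Then $h$ preserves the convex slab bounded by these two disjoint hyperplanes, so it maps the sub-segment $\sigma$ of $\ell_g$ from $\ell_g\cap\hat h$ to $\ell_g\cap\hat h'$ to a geodesic segment with the same endpoint hyperplanes, crossing the same pairwise-disjoint family of hyperplanes. By the contracting property of $\ell_g$ (valid as soon as $N$ spans at least one period), any such geodesic remains within a uniformly bounded neighbourhood of $\sigma$, so $h$ coarsely preserves the line $\ell_g$. One then shows the coarse stabilizer of $\ell_g$ is virtually cyclic, with $\langle g\rangle$ of finite index --- this is a WPD-type statement, and it should follow from the combinatorics of $\ell_g$ in Lemma~\ref{non-elementary action} together with the fact that $\ell_g$ bounds no half-flat --- while $\langle g\rangle$ itself meets $\mathrm{Stab}_{\AG}(\hat h)$ trivially, since $g^{j}$ with $j\neq 0$ pushes $\hat h$ off itself along $\ell_g$. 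As a subgroup of a virtually cyclic group that meets its infinite cyclic part trivially is finite, the claim follows.

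I expect the final step to be the main obstacle: upgrading ``$h$ stabilizes two far-apart disjoint hyperplanes met by $\ell_g$'' to ``$h$ lies in a virtually cyclic coarse stabilizer of $\ell_g$'' is precisely where the rank one / contracting geometry of $\ell_g$ from Lemma~\ref{non-elementary action} must be used in an essential way. A more combinatorial alternative is to identify $\mathrm{Stab}_{\AG}(H_s)=A_{\mathrm{lk}(s)}$ --- the inclusion $\supseteq$ is immediate, since for $t\in\mathrm{lk}(s)$ the edge $t\cdot(A_\emptyset,A_{\{s\}})$ is parallel, inside the square $t\cdot[A_\emptyset,A_{\{s,t\}}]$, to the edge $(A_{\{t\}},A_{\{s,t\}})$, which is dual to $H_s$ --- and then exhibit $\gamma\in\AG$ with $A_{\mathrm{lk}(s)}\cap\gamma A_{\mathrm{lk}(s)}\gamma^{-1}$ finite; but that requires controlling intersections of conjugates of standard parabolic subgroups in an arbitrary Artin group, and one cannot simply take untranslated hyperplanes $H_s,H_t$, since a non-join $\G$ may have a common neighbour for every pair of vertices.
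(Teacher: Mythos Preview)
Your setup matches the paper exactly: verify the Chatterji--Martin hypotheses for the action of $\AG$ on $\CG$, with irreducibility from Lemma~\ref{not join implies irreducible}, essentiality and non-elementarity from Lemma~\ref{non-elementary action}, and then choose two hyperplanes along the axis $\ell_g$ of the loxodromic $g=s_1\cdots s_k$.

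The gap is precisely where you locate it. Passing from ``$h$ stabilizes two far-apart hyperplanes on $\ell_g$'' to ``$h$ lies in a virtually cyclic coarse stabilizer of $\ell_g$'' is not proved: you invoke a ``WPD-type statement'' that ``should follow'', but establishing WPD for $g$ is essentially what one is trying to extract \emph{from} Theorem~\ref{Chatterji and Martin}, so as written the argument is circular. The contracting claim is also not free: property~(2) gives rank one (no hyperplane crosses an entire period), but upgrading this to a uniform contraction constant, and then to virtual cyclicity of the coarse stabilizer, is real work that your sketch does not supply.

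The paper sidesteps all of this with a short combinatorial argument that in fact shows $\mathrm{Stab}(H)\cap\mathrm{Stab}(H')=\{1\}$, not merely finite. Take $x=A_\emptyset$ on $\ell_g$ and let $H,H'$ be hyperplanes crossing $\ell_g$ on opposite sides of the full period $[x,gx]$; let $\alpha$ be the subsegment of $\ell_g$ from $N(H)$ to $N(H')$. Since the hyperplanes crossed by $\ell_g$ are pairwise disjoint (property~(1)), every hyperplane in $\mathcal H_\alpha$ separates $H$ from $H'$, so any edge path between the two carriers crosses all of $\mathcal H_\alpha$, and $\alpha$ realizes this minimum. For uniqueness, any other vertex $y\in N(H)$ is separated from the initial vertex of $\alpha$ by some hyperplane $H''$ that meets $H$; but $H''$ cannot cross $\alpha$ (property~(1)) and cannot cross every hyperplane in $\mathcal H_\alpha$ (property~(2), since $\alpha\supseteq[x,gx]$ contains hyperplanes of every type), so $H''$ separates $y$ from $H'$ and forces an extra crossing. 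Thus $\alpha$ is the \emph{unique} minimal edge path between $N(H)$ and $N(H')$, any $a$ stabilizing both hyperplanes must fix $\alpha$ pointwise, hence fixes $A_\emptyset$, hence $a=1$. This replaces your coarse-geometric detour with two paragraphs of hyperplane combinatorics.
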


\begin{proof} It remains only to show that there exist a pair of hyperplanes whose stabilizers have finite intersection.  We can then apply Theorem \ref{Chatterji and Martin} to obtain the desired result. 

Let $g=s_1, \dots ,s_k$ be the group element constructed in Lemma \ref{non-elementary action}.  This element was designed so that its axis, $\ell_g$, is a geodesic edge path which crosses a sequence of hyperplanes such that no two consecutive hyperplanes intersect, and no hyperplane in $\CG$ intersects all of the hyperplanes crossed by a segment $[x,gx]$ of $\ell_g$.  

Take $x=A_\emptyset$, a point on $\ell_g$. Chose any two hyperplanes $H$ and $H'$ which cross $\ell_g$ on opposite sides of the segment $[x,gx]$.  Let $N(H)$ denote the cubical neighborhood of $H$, that is, the union of cubes intersecting $H$.  Let $\alpha$ be the segment of $\ell_g$ connecting $N(H)$ to $N(H')$.  
We claim that $\alpha$ is the unique minimal length edge path connecting these two neighborhoods.  It will follow from this claim that if $a \in \AG$ stabilizes both $H$ and $H'$, then $a$ must fix $\alpha$.  In particular, it fixes the vertex $A_\emptyset$, so $a=1$.

To verify the claim, let $\mathcal H_\alpha$ denote the set of hyperplanes crossed by $\alpha$.  Since $\alpha$ contains the segment $[x,gx]$, $\mathcal H_\alpha$ contains hyperplanes of every possible type.  
Since no two hyperplanes that cross $\ell_g$ can intersect, every hyperplane in $\mathcal H_\alpha$  separates  $H$ from $H'$, hence any edge path between these two hyperplanes must cross every hyperplane in $\mathcal H_\alpha$.  Moreover, none of these hyperplanes cross each other, so $\alpha$ is the unique minimal length edge path connecting its own endpoints.  

Now suppose we start at some other vertex $y$ in  $N(H)$.  Then $y$ and the initial point of $\alpha$ are separated by some hyperplane $H''$ which intersects $H$.   $H''$ cannot cross $\alpha$ since no two hyperplanes which cross $\ell_g$ intersect each other.  Nor can it cross every hyperplane in $\mathcal H_\alpha$, since two hyperplanes of the same type cannot intersect. Thus, $H''$ must separate $y$ from $H'$.  It follows that any edge path from $y$ to $H'$ crosses at least one hyperplane in addition to $\mathcal H_\alpha$.  This proves the claim and completes the proof of the theorem.

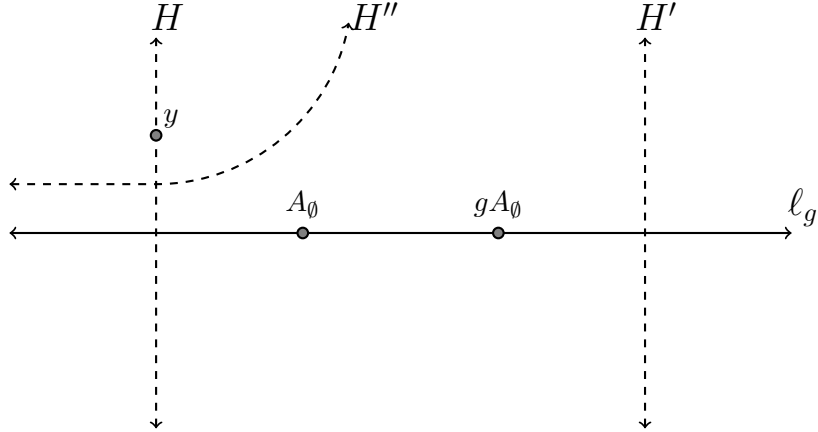
\begin{figure}
\begin{tikzpicture}[thick, scale=1.3]


\draw[<->] (-3,0)--(5,0);

\draw (0,0) node[vertex,label={[shift={(0,0.1)}]$A_\emptyset$}]{};

\draw (2,0) node[vertex,label={[shift={(0,0.1)}]$gA_\emptyset$}]{};

\draw (5,0) 	node[label={[shift={(0.15,0.05)}]{{\Large $\ell_g$}}}]{};	 


\draw[dashed, <->] (3.5,-2)--(3.5,2);

\draw (3.5,2) 	node[label={[shift={(0.15,0.05)}]{{\Large $H'$}}}]{};	

\draw[dashed, <->] (-1.5,-2)--(-1.5,2);

\draw (-1.5,2) 	node[label={[shift={(0.15,0.05)}]{{\Large $H$}}}]{};	

\draw (-1.5,1) node[vertex,label={[shift={(.2,0)}]$y$}]{};

\draw[dashed, <->] (-3,0.5)--(-1.5,0.5) arc (-90:-10:2);

\draw (0.5,2) 	node[label={[shift={(0.3,0.05)}]{{\Large $H''$}}}]{};

\end{tikzpicture}
\caption{The axis of $\ell_g$ is the shortest edge path between the cubical hyperplane neighborhoods $N(H)$ and $N(H')$, as the hyperplane $H''$ cannot follow parallel to $\ell_g$ and cannot intersect $\ell_g$ between $A_\emptyset$ and $gA_\emptyset$. }
\label{fig:disjoint_stabilizer_proof}
\end{figure}
\end{proof}

\begin{remark} Rather than using the theorem of Chatterji and Martin, one could instead use a theorem of  Genevois (Theorem 17, \cite{Gen2}) to conclude that the element $g$ constructed in the proof above is a contracting, WPD element.  It then follows directly from either Bestvina-Bromberg-Fujiwara \cite{BeBrFu} or Sisto \cite{Sis} that $\AG$ is acylindrically hyperbolic.
\end{remark}

\section{Remarks on Other Conjectures} \label{Additional_remarks}

In \cite{Go-Pa1} and \cite{Go-Pa2}, Godelle and Paris show that several of the conjectures listed in the introduction can be reduced to the case where the defining graph is a single clique.  In this section, we observe that the clique-cube complex gives particularly simple proofs of this reduction for two of these conjectures.

The easiest of these is the conjecture that $\AG$ is torsion-free. 

\begin{theorem}  If $A_T$ is torsion-free for every clique $T$ in $\G$, then $\AG$ is torsion-free.
\end{theorem}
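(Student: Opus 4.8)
The plan is to exploit the action of $\AG$ on the CAT(0) cube complex $\CG$, using the key structural fact recorded in Section~\ref{Cube Complex} that the stabilizer of a vertex $hA_{T'}$ is the conjugate subgroup $hA_{T'}h^{-1}$, with $T'$ a clique. Suppose $g \in \AG$ has finite order; the goal is to conclude $g = 1$. Since a loxodromic isometry translates along a bi-infinite geodesic it has infinite order, so by the elliptic/loxodromic dichotomy for cubical isometries already used in the proof of Theorem~\ref{thm: trivial center} (\cite{Br-Ha}, II.6.6), the element $g$ must be elliptic: it fixes some point $x \in \CG$. (Alternatively, one could invoke the Bruhat--Tits fixed point theorem, since the finite cyclic group $\langle g\rangle$ has a bounded orbit in the complete CAT(0) space $\CG$.)

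The next step is to upgrade ``$g$ fixes a point'' to ``$g$ fixes a vertex''. Every point of a CAT(0) cube complex lies in the interior of a unique cube; let $C$ be the cube containing $x$. Because $g$ acts by cubical automorphisms and $gx = x$ lies in the interior of the cube $gC$, uniqueness of this carrier forces $gC = C$. Write $C$ as the cube spanned by an interval $[hA_T, hA_{T'}]$ with $T \subseteq T'$ both cliques, so that the vertices of $C$ are the cosets $hA_R$ for $T \subseteq R \subseteq T'$, partially ordered by inclusion of cosets. Left translation by $g$ is a bijection of $\AG$ and hence preserves inclusion of cosets, so $g$ restricts to an order-automorphism of the vertex set of $C$; since $hA_{T'}$ is its unique maximal element, $g$ must fix $hA_{T'}$.

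Therefore $g$ lies in the stabilizer $hA_{T'}h^{-1}$, which is isomorphic to $A_{T'}$. As $T'$ is a clique, $A_{T'}$ is torsion-free by hypothesis, so the finite-order element $g$ is trivial. Hence $\AG$ contains no nontrivial torsion element, i.e.\ it is torsion-free. I do not expect a real obstacle here: the only slightly delicate point is the passage from a fixed point to a fixed cube, and then to the distinguished fixed vertex $hA_{T'}$, which rests on uniqueness of carriers together with the fact that left translation respects the coset-inclusion order on the vertices of a cube; everything else is immediate from the constructions of Section~\ref{Cube Complex}.
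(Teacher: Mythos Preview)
Your proof is correct and follows essentially the same approach as the paper: a finite-order element fixes a point of $\CG$, hence stabilizes the cube containing it, hence fixes a vertex and lies in a conjugate of some $A_{T'}$ with $T'$ a clique. The only cosmetic difference is in how you pass from a fixed point to a fixed vertex: the paper observes that the $\AG$-action preserves hyperplane types and therefore fixes the entire carrier cube pointwise, whereas you use uniqueness of the carrier and the fact that left translation preserves coset inclusion to pin down the maximal vertex $hA_{T'}$; both arguments are equally valid.
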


\begin{proof}  Suppose $g \in \AG$ has finite order.  Then $g$ fixes a point $x$ in $\CG$.  Since the action of $g$ preserves the type of every hyperplane, $g$ must fix the entire cube containing $x$.  In particular, it fixes some vertex $aA_T$.  Thus $g$ lies in $aA_Ta^{-1}$ which, by assumption, is torsion free.
\end{proof}

Recall the the $K(\pi,1)$-conjecture for $\AG$ states that, after restricting the hyperplane complement $\mathcal H_\G$ to an open cone $\mathcal V_\G$ (the Vindberg cone),  the quotient space $X_\G = \mathcal V_\G / \WG$, is a $K(\pi,1)$-space for $\AG$.  This space is known to have fundamental group $\AG$, so the conjecture reduces to showing that the universal covering space of $X_\G$ is contractible.  While the space $X_\G$ is not compact, in \cite{Ch-Da2}, the authors show that it is homotopy equivalent to a finite complex, known as the Salvetti complex, for any graph $\G$. Thus, if the $K(\pi,1)$-conjecture holds, then $\AG$ also has a finite $K(\pi,1)$-space.  

Using the clique cube complex, we can reduce the $K(\pi,1)$-conjecture, and hence the existence of a finite $K(\pi,1)$-space, to the case of a single clique.  This was first proved by  Ellis-Sk\"oldberg in \cite{El-Sk} and Godelle-Paris in \cite{Go-Pa1}.   

\begin{theorem}  If $A_T$ satisfies the $K(\pi,1)$-conjecture for every clique $T$ in $\G$, then $\AG$ satisfies the 
$K(\pi,1)$-conjecture.
\end{theorem}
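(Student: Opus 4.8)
The plan is to build a $K(\pi,1)$ for $A_\G$ by gluing together $K(\pi,1)$'s for the clique subgroups $A_T$, using the clique-cube complex $\CG$ as the organizing scaffold. The key point is that $\CG$ is CAT(0), hence contractible, and $A_\G$ acts on it with stabilizers exactly the conjugates $gA_Tg^{-1}$ of clique subgroups. So I would set up a complex of spaces (in the sense of Haefliger, or a homotopy colimit / Borel-type construction) over the quotient $\CG/A_\G$, whose vertex spaces are the presumed $K(A_T,1)$'s and whose higher cells record the inclusions $A_T \hookrightarrow A_{T'}$ for $T \subseteq T'$.

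First I would recall the precise form of the reduction: the Salvetti complex $\mathrm{Sal}_\G$ is a finite complex with $\pi_1 = A_\G$ that is homotopy equivalent to $X_\G$, and it is assembled from sub-Salvetti-complexes $\mathrm{Sal}_T$ for the cliques $T$ — indeed $\mathrm{Sal}_\G$ is the union of the $\mathrm{Sal}_T$ over $T \in \Scl$, glued along $\mathrm{Sal}_{T \cap T'}$. Equivalently, $\mathrm{Sal}_\G$ is the homotopy colimit of the diagram $T \mapsto \mathrm{Sal}_T$ indexed by the poset $\Scl$. Passing to universal covers, the universal cover $\widetilde{X_\G}$ is (up to homotopy) the homotopy colimit over the development of this diagram: it is built out of copies of the universal covers $\widetilde{X_T}$, one for each coset $gA_T$, glued according to the combinatorics of $\CG$. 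Concretely, there is a natural $A_\G$-equivariant map $\widetilde{X_\G} \to \CG$ whose fiber over a vertex $gA_T$ is a copy of $\widetilde{X_T}$ (and over a $k$-cube a copy of $\widetilde{X_T} \times [0,1]^k$-type gluing region), so that $\widetilde{X_\G}$ is the total space of a "cube complex of spaces" with base $\CG$ and fibers the $\widetilde{X_T}$.

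Given that picture, the argument is: by hypothesis each $A_T$ for $T$ a clique satisfies the $K(\pi,1)$-conjecture, so each fiber $\widetilde{X_T}$ is contractible. The base $\CG$ is contractible by the Godelle–Paris theorem quoted above. A complex of spaces whose base and all of whose fibers are contractible has contractible total space — this is the standard Mayer–Vietoris / Quillen Theorem A style argument (the projection $\widetilde{X_\G} \to \CG$ is a homotopy equivalence because it is one over each cell and the cells are glued compatibly; one does a nerve/colimit induction over the skeleta of $\CG$, at each stage pushing out along a contractible subspace). Hence $\widetilde{X_\G}$ is contractible, i.e. $X_\G$ is a $K(A_\G,1)$.

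The main obstacle is making the "complex of spaces" bookkeeping honest rather than hand-wavy: one must verify that $\widetilde{X_\G}$ really does decompose $A_\G$-equivariantly over $\CG$ with the clique universal covers as fibers, and that the gluing maps are cofibrations so that the homotopy-colimit / iterated-pushout argument applies. This amounts to checking that $X_\G = \mathcal V_\G/\WG$ (or the Salvetti model) restricts correctly over the parabolic subgroups $A_T$ — i.e. that the preimage of the $T$-stratum is $X_T$ — which is exactly the kind of statement proved in \cite{Ch-Da2} and used by Godelle–Paris. Once that structural fact is in hand, the contractibility conclusion is formal. I would therefore organize the write-up as: (1) recall the Salvetti model and its decomposition over cliques; (2) identify $\widetilde{X_\G}$ as a cube complex of spaces over $\CG$ with fibers $\widetilde{X_T}$; (3) invoke CAT(0)-ness of $\CG$ and the hypothesis to get all pieces contractible; (4) conclude by the skeletal pushout induction that the total space is contractible.
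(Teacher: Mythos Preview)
Your approach is correct and is essentially the complex-of-spaces / Borel-construction argument: decompose the Salvetti complex as a (homotopy) colimit over the clique poset $\Scl$, identify the development of that diagram with a fibering of $\widetilde{X_\G}$ over $\CG$, and conclude contractibility from contractibility of base and fibers. The paper, however, takes a more elementary and direct route that stays entirely inside the cube complexes and avoids the homotopy-colimit machinery. Using the Charney--Davis identification $\widetilde{X_\G} \simeq \DG$, it views the Deligne complex $\DG$ as the subcomplex of $\CG$ spanned by finite-type cosets, and shows $\DG \hookrightarrow \CG$ is a homotopy equivalence by inductively coning off the ``downward link'' $dl(aA_T)$ of each infinite-type vertex $aA_T$. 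The hypothesis enters because for a clique $T$ the downward link of $A_T$ is exactly $\DT$, contractible by assumption; an induction on $|T|$ then shows all the downward links being coned off are contractible, so $\DG \simeq \CG$, which is CAT(0) hence contractible. Your route buys conceptual clarity and fits the general Haefliger framework, but requires the bookkeeping you flag (that $\mathrm{Sal}_\G$ really is the hocolim of the $\mathrm{Sal}_T$, cofibrancy of the gluing maps, injectivity of $A_T \hookrightarrow \AG$). The paper's route sidesteps all of that---no Borel construction, no hocolims---at the cost of introducing the Deligne complex as an intermediary and doing an explicit cell-by-cell comparison.
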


\begin{proof}  In \cite{Ch-Da1}, it is shown that the universal covering space of $X_\G$ is homotopy equivalent to the Deligne complex $\DG$, thus  the theorem can be restated as follows:  if $\DT$ is contractible for every clique $T$ in $\G$, then $\DG$ is contractible.

We can view the Deligne complex $\DG$ as the subcomplex of $\CG$ consisting of all cubes whose vertices are cosets of the form $aA_T$ with $A_T$ of finite type.  Now let $v=aA_T$ be any vertex in $\CG$.  Define the ``downward link" of $v$, $dl(v)$, to be the subcomplex of $\CG$ spanned by the set  of all vertices $bA_R$ with 
 $bA_R \subsetneq aA_T$.  
 
 We first prove by induction on $|T|$, that if $v=A_T$ is of infinite type, then $dl(v)$ is contractible (and hence likewise for $v=aA_T$).   Every 2-clique in $\G$ generates a finite type Artin group, so we begin with $|T|$=3.  In this case,  $dl(v)=\DT$ which is contractible by hypothesis.  Now say $|T|$=k.  Once again, $\DT$ is contractible by hypothesis,   and to obtain $dl(v)$ from $\DT$, we must add the vertices of the form $w=bA_R \subsetneq A_T$ where $R$ is a clique and $A_R$ is of infinite type.  We do this inductively on $|R|$, so that at each stage, all smaller vertices have already been attached.  Thus, topologically, attaching $w$ has the effect of coning off $dl(w)$.  Since $|R| < |T|$, we know by induction that $dl(w)$ is contractible, hence coning it off does not change the homotopy type.  Since $dl(v)$ is obtained from $\DT$ by attaching a series of these cones, we conclude that $dl(v)$ is also contractible.  
 
 Now consider the full graph $\G$.  By the same argument, $\CG$ is obtained from $\DG$ by inductively coning off 
 $dl(v)$ for each vertex $v=aA_T$ with $A_T$ of infinite type.  By the previous paragraph, these downward links are all contractible, hence $\DG$ is homotopy equivalent to $\CG$.  Since $\CG$ is CAT(0), it is contractible, so the same holds for $\DG$.
 \end{proof}

 As noted in Section \ref{Cube Complex}, the clique-cube complex is not a useful tool when $\G$ is a single clique since in this case $\CG$ has finite diameter and the action of $\AG$ has a global fixed point.  So the next goal is to find a useful complex to study the single-clique case.

We remark that recent work of Juh\'asz \cite{Juh}, using more combinatorial methods, gives some other reductions of these conjectures.   In particular, he reduces the $K(\pi,1)$-conjecture and the torsion-free conjecture to the case
of a graph $\G$ where every vertex is contained in some finite type subgroup $A_T$ with $|T| \geq 3$.  He also states that in a forthcoming paper he will use similar methods to prove the trivial center conjecture for graphs which do \emph{not} satisfy this property. 	
		
\bibliographystyle{plain}
\bibliography{full_biblio}		

\end{document}